\DeclareMathAlphabet\mathbfcal{OMS}{cmsy}{b}{n}
\titleformat{\chapter}[display]
{\normalfont\huge\bfseries}{\chaptertitlename\\thechapter}{20pt}{\Huge}
\titleformat{\paragraph}[runin]
{\normalfont\normalsize\bfseries}{\theparagraph}{1em}{}
\titleformat{\subparagraph}[runin]
{\normalfont\normalsize\bfseries}{\thesubparagraph}{1em}{}
\titlespacing*{\chapter} {0pt}{50pt}{40pt}
\titlespacing*{\section} {0pt}{3.5ex plus 1ex minus .2ex}{2.3ex plus .2ex}
\titlespacing*{\subsection} {0pt}{3.25ex plus 1ex minus .2ex}{1.5ex plus .2ex}
\titlespacing*{\subsubsection}{0pt}{3.25ex plus 1ex minus .2ex}{1.5ex plus .2ex}
\titlespacing*{\paragraph} {0pt}{3.25ex plus 1ex minus .2ex}{1em}
\titlespacing*{\subparagraph} {\parindent}{3.25ex plus 1ex minus .2ex}{1em}
\newtheorem{theorem}{Theorem}[section]
\newtheorem{lemma}[theorem]{Lemma}
\newtheorem{proposition}[theorem]{Proposition}
\theoremstyle{definition}
\newtheorem{definition}[theorem]{Definition}
\newtheorem{example}[theorem]{Example}
\theoremstyle{remark}
\newtheorem{remark}[theorem]{Remark}
\DeclareMathOperator{\ima}{Im}
\newcommand{\hsa}{\hspace{-0.7pt}}
\newcommand{\hsb}{\hspace{-0.5pt}}
\newcommand{\ov}{\overline}
\newcommand{\ot}{\otimes}
\newcommand{\hsm}{\hspace{-0.2pt}}
\newcommand{\xcdot}{\hsm\cdot\hsm}
\newcommand{\xcirc}{\hsm\circ\hsm}
\DeclareMathAlphabet{\mathpzc}{OT1}{pzc}{m}{it}
\begin{document}

\title{Comparison of two notions of weak crossed product}

\author{Jorge A. Guccione}
\address{Departamento de Matem\'atica\\ Facultad de Ciencias Exactas y Naturales-UBA, Pabell\'on~1-Ciudad Universitaria\\ Intendente Guiraldes 2160 (C1428EGA) Buenos Aires, Argentina.}
\address{Instituto de Investigaciones Matem\'aticas ``Luis A. Santal\'o"\\ Facultad de Ciencias Exactas y Natu\-ra\-les-UBA, Pabell\'on~1-Ciudad Universitaria\\ Intendente Guiraldes 2160 (C1428EGA) Buenos Aires, Argentina.}
\email{vander@dm.uba.ar}

\author{Juan J. Guccione}
\address{Departamento de Matem\'atica\\ Facultad de Ciencias Exactas y Naturales-UBA\\ Pabell\'on~1-Ciudad Universitaria\\ Intendente Guiraldes 2160 (C1428EGA) Buenos Aires, Argentina.}
\address{Instituto Argentino de Matem\'atica-CONICET\\ Savedra 15 3er piso\\ (C1083ACA) Buenos Aires, Argentina.}
\email{jjgucci@dm.uba.ar}

\thanks{Jorge A. Guccione and Juan J. Guccione were supported by UBACyT 20020150100153BA (UBA)}

\subjclass[2010]{primary 16T05; secondary 18D10}
\keywords{Weak Hopf algebras, Weak crossed products.}

\begin{abstract} We compare the restriction to the context of weak Hopf algebras of the notion of crossed product with a Hopf algebroid introduced in \cite{BB} with the notion of crossed product with a weak Hopf algebra introduced in~\cite{AG}.
\end{abstract}

\maketitle

\tableofcontents

\section*{Introduction}

Two different notions of crossed product of an algebra $A$ with a weak Hopf algebra $H$ have been purposed. The first one is the restriction to the context of weak Hopf algebras of the notion of crossed product with a Hopf algebroid introduced by B\"ohm and Brzezi\'nski in~\cite{BB}. The second one was introduced in~\cite{AG} and studied in a series of papers (see for instance \cites{AFGR1, AFGR2, AFGR3, FGR, Ra}). The aim of this paper is to prove that there exists the last one if and only if there exists the first one and condition~(10) below is fulfilled. Moreover, in this case both constructions are canonically isomorphic. Finally we provide an example which shows that there are crossed products of algebras with weak Hopf algebras in the sense of B\"ohm and Brzezi\'nski that not satisfy condition~(10).

\section{Preliminaries}\label{section: preliminares}

In this section we review the notion of weak Hopf algebra and the notions of crossed product that we want to compare.

\subsection{Weak Hopf algebras}
Weak bialgebras and weak Hopf algebras are generalizations of bialgebras and Hopf algebras, introduced in~\cites{BNS1,BNS2}, in which the axioms about the unit, the counit and the antipode are replaced by weaker properties. Next we give a brief review of the basic properties of these structures.

\smallskip

Let $k$ be a field. A weak {\em bialgebra} is a $k$-vector space $H$, endowed with an algebra structure and a coalgebra structure, such that $\Delta(hl) =\Delta(h)\Delta(l)$ for all $h,l\in H$, and the equalities
\begin{align}
\label{propiedad de 1}
& \Delta^2(1)= 1^{(1)}\ot 1^{(2)} 1^{(1')}\ot 1^{(2')}= 1^{(1)}\ot 1^{(1')} 1^{(2)}\ot 1^{(2')}
\shortintertext{and}
\label{propiedad de epsilon}
& \epsilon(hlm)=\epsilon(hl^{(1)})\epsilon(l^{(2)}m)=\epsilon(hl^{(2)})\epsilon(l^{(1)}m)\quad\text{for all $h,l,m\in H$,}
\end{align}
are fulfilled, where we are using the Sweedler notation for the coproduct, with the summation symbol omitted. A {\em weak bialgebra morphism} is a function $g\colon H\to L$ that is an algebra and a coalgebra map. For each weak bialgebra~$H$, the maps~$\Pi^L\colon H\to H$ and~$\Pi^R\colon H\to H$ defined by
$$
\Pi^L(h)\coloneqq \epsilon(1^{(1)}h)1^{(2)}\quad\text{and}\quad \Pi^R(h)\coloneqq 1^{(1)}\epsilon(h1^{(2)}),
$$
respectively, are idempotent. We set~$H^{\!L}\coloneqq \ima(\Pi^L)$ and~$H^{\!R}\coloneqq \ima(\Pi^R)$. In~\cite{BNS1} it was proven that~$H^{\!L}$ and~$H^{\!R}$ are subalgebras of~$H$.

\smallskip

Let $H$ be a weak bialgebra. An {\em antipode} of $H$ is a map $S\colon H\to H$ (or $S_H$ if necessary to avoid confusion), such that $h^{(1)}S(h^{(2)})=\Pi^L(h)$, $S(h^{(1)})h^{(2)}=\Pi^R(h)$ and $S(h^{(1)})h^{(2)}S(h^{(3)})=S(h)$, for all $h\in H$. As it was shown in~\cite{BNS1}, an antipode $S$, if there exists, is unique. It was also shown in~\cite{BNS1} that $S$ is antimultiplicative, anticomultiplicative and leaves the unit and counit invariant. A {\em weak Hopf algebra} is a weak bialgebra that has an antipode. A {\em morphism of weak Hopf algebras} $g\colon H\to L$ is simply a bialgebra morphism from $H$ to $L$. In~\cite{AFGLV}*{Pro\-position~1.4} it was proven that if $g\colon H\to L$ is a weak Hopf algebra morphism, then $g\xcirc S_H= S_L\xcirc g$. From now one we assume that $H$ is a weak Hopf algebra with bijective antipode.

\subsection[Crossed product $A\#^{\rho}_{\sigma} H$]{Crossed product $\bm{A\#^{\rho}_{\sigma} H}$}\label{subsection: Bhom y Brzezinksi}
The notion of crossed product with a Hopf algebroid was introduced by B\"ohm and Brzezi\'nski in~\cite{BB}. Since weak Hopf algebras provide examples of Hopf algebroids, this gives a notion of crossed product with a weak Hopf algebra. In this subsection we review this construction. For the proofs of the results we refer to~\cite{BB} (see also \cite{LSW}).

\smallskip

Let $A$ be a $k$-algebra, $H$ a weak Hopf algebra and $\rho\colon H\ot A\to A$ a map. We set $h\xcdot a\coloneqq \rho(h\ot a)$.

\begin{definition}\label{medida de H sobre un algebra} We say that $H$ {\em measures} $A$ via $\rho$ and that $\rho$ is a {\em measuring} if
\begin{enumerate}[itemsep=0.7ex, topsep=1.0ex, label=(\arabic*), leftmargin=0.9cm]

\item $h\xcdot 1_A=\Pi^L(h)\xcdot 1_A$,

\item $h\xcdot (aa') = (h^{(1)}\xcdot a)(h^{(2)}\xcdot a')$,

\item $S^{-1}(l)h\xcdot a = (h\xcdot a)(l\xcdot 1_A)$ and $lh\xcdot a = (l\xcdot 1_A)(h\xcdot a)$

\end{enumerate}
for all $h\in H$, $l\in H^{\!L}$ and $a,a'\in A$. The measuring is unital if
\begin{enumerate}[itemsep=0.7ex, topsep=1.0ex, label=(\arabic*), leftmargin=0.9cm, resume]

\item $1\cdot a=a$ for all $a\in A$.

\end{enumerate}

\end{definition}

\noindent By items~(3) and~(4) we have
$$
h\cdot (k\cdot a) = hk\cdot a\qquad\text{for all $h\in H^{\!L}H^{\!R}$, $k\in H$ and $a\in A$.}
$$
In the rest of this subsection

\begin{itemize}[itemsep=0.7ex, topsep=1.0ex, leftmargin=0.9cm, label=-]

\item we assume that $\rho$ is a unital measuring,

\item given a map $\sigma\colon H\ot_{H^{\!R}} H\to A$ we write $\sigma(h, k)$ instead of $\sigma(h\ot_{H^{\!R}} k)$,

\item given a map $\bar{\sigma}\colon H\ot_{H^{\!L}} H\to A$ we write $\bar{\sigma}(h, k)$ instead of $\bar{\sigma}(h\ot_{H^{\!L}} k)$.

\end{itemize}

\begin{definition}\label{cociclo} A map $\sigma\colon H\ot_{H^{\!R}} H\to A$ is a {\em normal cocycle} if
\begin{enumerate}[itemsep=0.7ex, topsep=1.0ex, label=(\arabic*), start=5, leftmargin=0.9cm]

\item $\sigma(lh, k)=(l\xcdot 1_A)\sigma(h, k)$ and $\sigma(S^{-1}(l)h, k)=\sigma(h, k)(l\xcdot 1_A)$,

\item $(h^{(1)}\xcdot (l\xcdot 1_A))\sigma(h^{(2)}, k)=\sigma(h,lk)$,

\item $\sigma(1, h)=\sigma(h, 1)= h\xcdot 1_A$,

\item $(h^{(1)}\xcdot \sigma(k^{(1)}, m^{(1)}))\sigma(h^{(2)}, k^{(2)}m^{(2)})= \sigma(h^{(1)}, k^{(1)})\sigma(h^{(2)}k^{(2)}, m)$,

\end{enumerate}
for all $h,k,m\in H$ and $l\in H^{\!L}$.
\end{definition}

\begin{definition}\label{modulo algebra} A map $\sigma\colon H\ot_{H^{\!R}} H\to A$ satisfy the {\em twisted module condition} if
\begin{enumerate}[itemsep=0.7ex, topsep=1.0ex, label=(\arabic*), start=9, leftmargin=0.9cm]

\item $(h^{(1)}\cdot (k^{(1)}\cdot a))\sigma(h^{(2)}, k^{(2)}) = \sigma(h^{(1)}, k^{(1)})(h^{(2)}k^{(2)}\cdot a)$ for all $h,k\in H$ and $a\in A$.

\end{enumerate}
\end{definition}

\begin{proposition}\label{pese} The following facts hold:

\begin{itemize}[itemsep=0.7ex, topsep=1.0ex, leftmargin=0.9cm]

\item[-] $h\cdot (k\cdot a) = hk\cdot a$ for all $h\in H^{\!L}H^{\!R}$, $k\in H$ and $a\in A$.

\item[-] $\sigma(h,k) = \sigma(h^{(1)},k^{(1)})(h^{(2)}k^{(2)}\xcdot 1_A)$ for all $h,k\in H$.

\end{itemize}

\end{proposition}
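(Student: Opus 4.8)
The plan is to prove the two assertions of Proposition~\ref{pese} in turn, the first being a direct consequence of the measuring axioms and the second following by a short computation that uses the normal cocycle axioms together with the structure of $H^{\!R}$.

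For the first item, I would argue as follows. Fix $k\in H$, $a\in A$, and take $h\in H^{\!L}H^{\!R}$, say $h = l\,r$ with $l\in H^{\!L}$ and $r\in H^{\!R}$. Using item~(4) of Definition~\ref{medida de H sobre un algebra} (the unitality $1\cdot a = a$) one checks first that $r\cdot a = (r\cdot 1_A)(1\cdot a) = (r\cdot 1_A)\,a$ for $r\in H^{\!R}$, since elements of $H^{\!R}$ can be written using the second formula in item~(3). Then $h\cdot(k\cdot a) = lr\cdot(k\cdot a)$, and applying the second equality of item~(3) with the element $l\in H^{\!L}$ gives $lr\cdot(k\cdot a) = (l\cdot 1_A)\bigl(r\cdot(k\cdot a)\bigr)$. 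So the problem reduces to the case $h = r\in H^{\!R}$, where one must verify $r\cdot(k\cdot a) = rk\cdot a$. This is precisely the content of the displayed identity right after Definition~\ref{medida de H sobre un algebra}, which the text already records as a consequence of items~(3) and~(4); so in fact the first bullet is essentially a restatement collecting $H^{\!L}$ and $H^{\!R}$ together, and I would simply spell out that the product decomposes as indicated and invoke that displayed identity.

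For the second item, the goal is $\sigma(h,k) = \sigma(h^{(1)},k^{(1)})(h^{(2)}k^{(2)}\cdot 1_A)$ for all $h,k\in H$. The natural route is to start from the twisted module condition, item~(9), specialized at $a = 1_A$:
\begin{equation*}
\bigl(h^{(1)}\cdot (k^{(1)}\cdot 1_A)\bigr)\sigma(h^{(2)}, k^{(2)}) = \sigma(h^{(1)}, k^{(1)})(h^{(2)}k^{(2)}\cdot 1_A).
\end{equation*}
The right-hand side is exactly what we want, so it remains to show that the left-hand side equals $\sigma(h,k)$. Using item~(1), $k^{(1)}\cdot 1_A = \Pi^L(k^{(1)})\cdot 1_A$, and then applying the already-proven relation $h^{(1)}\cdot(\Pi^L(k^{(1)})\cdot 1_A) = h^{(1)}\Pi^L(k^{(1)})\cdot 1_A$ (valid since $\Pi^L(k^{(1)})\in H^{\!L}$), one rewrites the left-hand side as $(h^{(1)}\Pi^L(k^{(1)})\cdot 1_A)\,\sigma(h^{(2)},k^{(2)})$. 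Now I would push $\Pi^L(k^{(1)})$ into the cocycle: by item~(6), $(h^{(1)}\cdot(l\cdot 1_A))\sigma(h^{(2)},k) = \sigma(h,lk)$ for $l\in H^{\!L}$, which after again using item~(1) and the decomposition of the coproduct of $k$ should collapse the whole left side to $\sigma(h^{(1)}, \Pi^L(k^{(1)})k^{(2)})$. Finally, a standard weak-Hopf-algebra identity gives $\Pi^L(k^{(1)})\otimes k^{(2)}$ reconstructs $k$ up to the counital projections, i.e. $\Pi^L(k^{(1)})k^{(2)} = k$ inside $H\otimes_{H^{\!R}} H$ after absorbing the projections; combined with normality of $\sigma$ over $H^{\!R}$ (item~(5)) this yields $\sigma(h,k)$.

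The main obstacle I anticipate is the bookkeeping in the last step: correctly tracking the $\otimes_{H^{\!R}}$ balancing and the counital maps $\Pi^L, \Pi^R$ so that the various $\Pi$-projections produced by items~(1) and~(6) can be legitimately slid across the tensor product using item~(5) and reabsorbed. In particular one must be careful that $\Pi^L(k^{(1)})$, although it acts on the left of the second cocycle slot via item~(6), ends up being the correct element to recombine with $k^{(2)}$; this likely requires the identity $h^{(1)}\otimes\Pi^L(h^{(2)}) = h^{(1)}1^{(2)}\otimes\Pi^L(1^{(1)}h^{(2)})$-type manipulations from~\cite{BNS1} together with $\Delta(1)$-gymnastics. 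Once the tensor-balancing is handled cleanly, the computation is purely formal. I would present it as a short displayed chain of equalities with each step annotated by the axiom number used.
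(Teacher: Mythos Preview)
Your overall route for the second item is the same as the paper's---use item~(9) at $a=1_A$, replace $k^{(1)}\cdot 1_A$ by $\Pi^L(k^{(1)})\cdot 1_A$ via item~(1), apply item~(6), and finish with $\Pi^L(k^{(1)})k^{(2)}=k$---but two of your intermediate moves are off.

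First, the step where you write ``applying the already-proven relation $h^{(1)}\cdot(\Pi^L(k^{(1)})\cdot 1_A) = h^{(1)}\Pi^L(k^{(1)})\cdot 1_A$ (valid since $\Pi^L(k^{(1)})\in H^{\!L}$)'' is wrong. The first bullet says $h\cdot(k\cdot a)=hk\cdot a$ when the \emph{outer} element $h$ lies in $H^{\!L}H^{\!R}$; it says nothing about arbitrary $h^{(1)}$ acting on $l\cdot 1_A$ with $l\in H^{\!L}$. The identity you invoke is exactly condition~(10), which is \emph{not} assumed at this point (and the paper later exhibits an example where (1)--(9) hold but (10) fails). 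Fortunately this step is also unnecessary: item~(6) takes as input precisely the expression $(h^{(1)}\cdot(l\cdot 1_A))\sigma(h^{(2)},k)$, so you should apply it directly to $(h^{(1)}\cdot(\Pi^L(k^{(1)})\cdot 1_A))\sigma(h^{(2)},k^{(2)})$ without first collapsing the nested action. That yields $\sigma(h,\Pi^L(k^{(1)})k^{(2)})$ in one stroke.

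Second, you anticipate difficulties with ``$\otimes_{H^{\!R}}$ balancing'' and item~(5) in the last step. None of that is needed: $\Pi^L(k^{(1)})k^{(2)}=k$ is a plain identity in $H$ (it follows, e.g., from $\Pi^L(h^{(1)})\otimes h^{(2)} = 1^{(1)}\otimes 1^{(2)}h$), so $\sigma(h,\Pi^L(k^{(1)})k^{(2)})=\sigma(h,k)$ on the nose. With these two corrections your argument is exactly the paper's three-line proof.
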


\begin{proof} The first item follows easily from items~(3) and~(4). We next prove the second one. By items~(1), (6) and~(9), we have
\begin{align*}
\sigma(h^{(1)},k^{(1)})(h^{(2)}k^{(2)}\xcdot 1_A) &= (h^{(1)}\xcdot (k^{(1)}\xcdot 1_A)\sigma(h^{(2)},k^{(2)})\\
& = (h^{(1)}\xcdot (\Pi^L(k^{(1)})\xcdot 1_A)\sigma(h^{(2)},k^{(2)})\\
& = \sigma (h,\Pi^L(k^{(1)})k^{(2)}).
\end{align*}
Since $\Pi^L(k^{(1)})k^{(2)} = k$ this ends the proof.
\end{proof}

\begin{theorem}\label{teor: producto cruzado} Let $\sigma\colon H\ot_{H^{\!R}} H\to A$ be a map satisfying conditions~(5) and~(6) of Def\-i\-ni\-tion~\ref{cociclo}. Consider $A$ a right $H^{\!L}$-module via $a\xcdot l\coloneqq a(l\cdot 1_A)$. The $k$-vector space $A\ot_{H^{\!L}} H$ is an associative algebra with unit $1_A\ot_{H^{\!L}} 1$ via
$$
(a\ot_{H^{\!L}} h)(b\ot_{H^{\!L}} k)\coloneqq a(h^{(1)}\cdot b)\sigma(h^{(2)},k^{(1)}) \ot_{H^{\!L}} h^{(3)}k^{(2)}
$$
if and only if the measuring $\rho$ is unital and $\sigma$ is a normal cocycle that satisfies the twisted module condition.
\end{theorem}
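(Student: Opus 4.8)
The plan is to prove the two implications separately. Since the statement presupposes that the displayed formula defines a multiplication, I would begin by checking well-definedness: granted conditions~(5) and~(6), the formula descends to a map $(A\ot_{H^{\!L}}H)\ot_k(A\ot_{H^{\!L}}H)\to A\ot_{H^{\!L}}H$. The $H^{\!L}$-balancing in the two arguments fed to $\sigma$ is condition~(5); the balancing in the factor $h^{(1)}\cdot b$ uses item~(2) of Definition~\ref{medida de H sobre un algebra} together with the identity $\Delta(l)=1^{(1)}l\ot1^{(2)}$ for $l\in H^{\!L}$; and condition~(6) reconciles the two. This is routine but must be recorded before the rest makes sense.

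For the ``if'' direction, assume $\rho$ is a unital measuring and $\sigma$ is a normal cocycle satisfying the twisted module condition. The unit axioms $(a\ot_{H^{\!L}}h)(1_A\ot_{H^{\!L}}1)=a\ot_{H^{\!L}}h$ and $(1_A\ot_{H^{\!L}}1)(b\ot_{H^{\!L}}k)=b\ot_{H^{\!L}}k$ follow from condition~(7), the unitality $1\cdot a=a$, items~(1)--(2), and the standard identities for $\Delta(1)$, $\Pi^L$ and $\Pi^R$. For associativity, expand the two triple products by the definition, distributing $h$ over products via item~(2):
\begin{align*}
\bigl((a\ot_{H^{\!L}}h)(b\ot_{H^{\!L}}k)\bigr)(c\ot_{H^{\!L}}m)&=a(h^{(1)}\cdot b)\sigma(h^{(2)},k^{(1)})(h^{(3)}k^{(2)}\cdot c)\sigma(h^{(4)}k^{(3)},m^{(1)})\ot_{H^{\!L}}h^{(5)}k^{(4)}m^{(2)},\\
(a\ot_{H^{\!L}}h)\bigl((b\ot_{H^{\!L}}k)(c\ot_{H^{\!L}}m)\bigr)&=a(h^{(1)}\cdot b)\bigl(h^{(2)}\cdot(k^{(1)}\cdot c)\bigr)\bigl(h^{(3)}\cdot\sigma(k^{(2)},m^{(1)})\bigr)\sigma(h^{(4)},k^{(3)}m^{(2)})\ot_{H^{\!L}}h^{(5)}k^{(4)}m^{(3)}.
\end{align*}
In the second expression apply condition~(8) to the subword $\bigl(h^{(3)}\cdot\sigma(k^{(2)},m^{(1)})\bigr)\sigma(h^{(4)},k^{(3)}m^{(2)})$, turning it into $\sigma(h^{(3)},k^{(2)})\sigma(h^{(4)}k^{(3)},m^{(1)})$, and then apply the twisted module condition~(9) to $\bigl(h^{(2)}\cdot(k^{(1)}\cdot c)\bigr)\sigma(h^{(3)},k^{(2)})$, turning it into $\sigma(h^{(2)},k^{(1)})(h^{(3)}k^{(2)}\cdot c)$. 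The outcome is literally the first expression, so associativity holds.

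For the ``only if'' direction, assume $A\ot_{H^{\!L}}H$ is an associative algebra with unit $1_A\ot_{H^{\!L}}1$ via the displayed formula; conditions~(5) and~(6) are assumed throughout, so only the unitality of $\rho$ and conditions~(7), (8), (9) need to be produced. I would extract the unitality of $\rho$ and condition~(7) from the right and left unit axioms evaluated on $a\ot_{H^{\!L}}1$ and $1_A\ot_{H^{\!L}}h$; with those available the product simplifies enough that condition~(8) can be read off from associativity evaluated on $(1_A\ot_{H^{\!L}}h)(1_A\ot_{H^{\!L}}k)(1_A\ot_{H^{\!L}}m)$ and condition~(9) from associativity evaluated on $(1_A\ot_{H^{\!L}}h)(1_A\ot_{H^{\!L}}k)(c\ot_{H^{\!L}}1)$, in each case reversing the rewriting used above. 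The main obstacle is the passage from an identity holding in $A\ot_{H^{\!L}}H$ to the corresponding pointwise identity: because $\epsilon$ is not multiplicative on a weak Hopf algebra, one cannot separate the $A$- and $H$-components by naively applying $\ide_A\ot\epsilon$. I would instead use the idempotents $\Pi^L,\Pi^R$, the antipode identities $h^{(1)}S(h^{(2)})=\Pi^L(h)$ and $S(h^{(1)})h^{(2)}=\Pi^R(h)$, and the normalization $\Pi^L(h^{(1)})h^{(2)}=h$ already used in Proposition~\ref{pese}, to build $H^{\!L}$-linear functionals on $A\ot_{H^{\!L}}H$ that isolate each identity; verifying that these functionals are well defined and isolate exactly what is wanted --- essentially the same $\Delta(1)$-bookkeeping that appears in the unit-axiom computations above --- is the delicate point, everything else being routine Sweedler manipulation.
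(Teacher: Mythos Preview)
The paper does not supply its own proof of this theorem: Subsection~\ref{subsection: Bhom y Brzezinksi} is a review, and immediately before Definition~\ref{medida de H sobre un algebra} the authors say explicitly that ``for the proofs of the results we refer to~\cite{BB} (see also~\cite{LSW}).'' So there is nothing to compare your argument against inside this paper.

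That said, your plan follows the standard route used in those references. The ``if'' direction is essentially complete: well-definedness from~(5)--(6), the unit axioms from~(7) and unitality, and the associativity computation via~(8) then~(9) are exactly the expected manipulations and are correct as written. For the ``only if'' direction your outline is right in spirit---one evaluates the unit and associativity axioms on strategically chosen simple tensors and reads off~(4), (7), (8), (9)---and you correctly flag the genuine obstacle, namely that equality in $A\ot_{H^{\!L}}H$ does not immediately yield an equality in $A$. Your proposed fix (build $H^{\!L}$-balanced functionals using $\Pi^L$, $\Pi^R$ and the antipode) is workable; concretely, the map $a\ot_{H^{\!L}}h\mapsto a(h\cdot 1_A)$ is well defined by item~(3) of Definition~\ref{medida de H sobre un algebra} and, together with Proposition~\ref{pese}, lets you recover the $A$-valued identities. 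This is precisely what is done in~\cite{BB} and~\cite{LSW}; you would just be reproducing their argument, so citing them (as the paper does) is entirely appropriate.
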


\begin{definition}\label{producto cruzado} The algebra constructed in Theorem~\ref{teor: producto cruzado} is denoted $A\#^{\rho}_{\sigma} H$ and is called {\em the weak crossed product of $A$ with $H$ associated with $\rho$ and $\sigma$}.
\end{definition}

We consider $A\#^{\rho}_{\sigma} H$ as a left $A$-module via the natural action and as a right $H$-comodule via the map $\delta\colon A\#^{\rho}_{\sigma} H\to A\#^{\rho}_{\sigma} H\ot H$, defined by  $\delta(a\ot_{H^{\!L}}h)\coloneqq a\ot_{H^{\!L}}h^{(1)} \ot h^{(2)}$.

\begin{proposition}\label{212 y 213} Under conditions~(1), (2), (6) and~(7) the following assertions are equivalent

\begin{enumerate}[itemsep=0.7ex, topsep=1.0ex, label=\emph{(\arabic*)}, start=10, leftmargin=0.9cm]

\item $h\xcdot (l\xcdot 1_A)=hl\xcdot 1_A$ for all $h\in H$ and $l\in H^{\!L}$,

\item $h\xcdot (l\xcdot 1_A)=hl\xcdot 1_A$ for all $h,l\in H$,

\item $\sigma(h,l)=\sigma(hl,1)$ for all $h\in H$ and $l\in H^{\!L}$.

\end{enumerate}

\end{proposition}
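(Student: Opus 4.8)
The plan is to reduce the whole equivalence to condition~(10). The first observation is that, under conditions~(2), (6) and~(7), the value $\sigma(h,l)$ for $l\in H^{\!L}$ is already determined by the measuring: specializing condition~(6) to $k=1$ and replacing $\sigma(h^{(2)},1)$ by $h^{(2)}\xcdot 1_A$ via condition~(7) yields
$$
\sigma(h,l)=(h^{(1)}\xcdot(l\xcdot 1_A))(h^{(2)}\xcdot 1_A),
$$
and condition~(2), applied with $a=l\xcdot 1_A$ and $a'=1_A$, rewrites the right-hand side as $h\xcdot((l\xcdot 1_A)1_A)=h\xcdot(l\xcdot 1_A)$. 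Hence $\sigma(h,l)=h\xcdot(l\xcdot 1_A)$ for all $h\in H$ and $l\in H^{\!L}$, whereas $\sigma(hl,1)=hl\xcdot 1_A$ for such $h,l$ directly by condition~(7). Comparing these two formulas shows that condition~(12), namely $\sigma(h,l)=\sigma(hl,1)$ for $h\in H$ and $l\in H^{\!L}$, is verbatim the statement that $h\xcdot(l\xcdot 1_A)=hl\xcdot 1_A$ for $h\in H$ and $l\in H^{\!L}$; that is, condition~(12) is equivalent to condition~(10).

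It then remains to prove that conditions~(10) and~(11) are equivalent. The implication (11)$\Rightarrow$(10) is trivial since $H^{\!L}\subseteq H$. For (10)$\Rightarrow$(11), fix $h,l\in H$. Condition~(1) gives $l\xcdot 1_A=\Pi^L(l)\xcdot 1_A$, and $\Pi^L(l)\in H^{\!L}$, so condition~(10) applies and, using condition~(1) once more,
$$
h\xcdot(l\xcdot 1_A)=h\xcdot(\Pi^L(l)\xcdot 1_A)=h\Pi^L(l)\xcdot 1_A=\Pi^L\bigl(h\Pi^L(l)\bigr)\xcdot 1_A .
$$
Since also $hl\xcdot 1_A=\Pi^L(hl)\xcdot 1_A$ by condition~(1), the implication (10)$\Rightarrow$(11) reduces to the purely weak-bialgebra identity $\Pi^L\bigl(h\Pi^L(l)\bigr)=\Pi^L(hl)$ for all $h,l\in H$.

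That last identity is the only point requiring a (very short) computation. From $\Pi^L(x)=\epsilon(1^{(1)}x)1^{(2)}$ and the counit axiom~\eqref{propiedad de epsilon}, written in the form $\epsilon(xy)=\epsilon(x1^{(2)})\epsilon(1^{(1)}y)$, one gets $\epsilon(x\Pi^L(l))=\epsilon(xl)$ for every $x\in H$; substituting $x=1^{(1)}h$ into the definition of $\Pi^L$ then gives $\Pi^L\bigl(h\Pi^L(l)\bigr)=\epsilon(1^{(1)}hl)1^{(2)}=\Pi^L(hl)$. With this in hand the chain $(10)\Leftrightarrow(11)\Leftrightarrow(12)$ is complete. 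I do not expect any real obstacle here: after the first paragraph conditions~(10) and~(12) coincide as statements, and the remaining work is a single elementary weak-bialgebra manipulation, the only care needed being to keep track of which tensor leg of $\Delta(1)$ appears where.
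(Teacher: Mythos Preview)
Your proof is correct and follows essentially the same route as the paper's: for $(10)\Leftrightarrow(12)$ both arguments use conditions~(6), (7) and~(2) to show $\sigma(h,l)=h\xcdot(l\xcdot 1_A)$ and $\sigma(hl,1)=hl\xcdot 1_A$, and for $(10)\Leftrightarrow(11)$ both reduce to the weak-bialgebra identity $\Pi^L(h\Pi^L(l))=\Pi^L(hl)$, which the paper cites from \cite{BNS1}*{Equality~(2.5a)} while you derive it directly from the counit axiom.
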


\begin{proof} (1) $\Leftrightarrow$ (2).\enspace Since, by item~(1) and \cite{BNS1}*{Equality~(2.5a)}, we have
$$
hl\xcdot 1_A=\Pi^L(hl)\xcdot 1_A=\Pi^L(h\Pi^L(l))\xcdot 1_A=h\Pi^L(l)\xcdot 1_A\quad\text{and}\quad h\xcdot(\Pi^L(l)\xcdot 1_A)= h\xcdot (l\xcdot 1_A),
$$
for all $h,l\in H$.

\smallskip

\noindent (1) $\Leftrightarrow$ (3)\enspace Since, by items~(2), (6) and~(7), we have
$$
\sigma(h,l) = (h^{(1)}\xcdot (l\xcdot 1_A))\sigma(h^{(2)},1) = (h^{(1)}\xcdot (l\xcdot 1_A))(h^{(2)}\xcdot 1_A)=h\xcdot (l\xcdot 1_A)\quad\text{and}\quad hl\xcdot 1_A = \sigma(hl,1).
$$
for all $h$ and $l\in H^L$.
\end{proof}

\begin{definition}\label{cociclo inversible} We say that a normal cocycle $\sigma\colon H\ot_{H^{\!R}} H\to A$ satisfying the twisted module condition is invertible if there exists a map $\bar{\sigma}\colon H\ot_{H^{\!L}} H\to A$ satisfying
\begin{enumerate}[itemsep=0.7ex, topsep=1.0ex, label=(\arabic*), start=13, leftmargin=0.9cm]

\item $\bar{\sigma}(h,k) = (h^{(1)}k^{(1)}\cdot 1_A) \bar{\sigma}(h^{(2)},k^{(2)})$,

\item $\bar{\sigma}(lh,k)=(l\xcdot 1_A)\bar{\sigma}(h,k)$ and $\bar{\sigma}(S^{-1}(l)h,k)=\bar{\sigma}(h,k)(l\xcdot 1_A)$,

\item $\bar{\sigma}(h^{(1)},k) (h^{(2)}\xcdot (l\xcdot 1_A))=\bar{\sigma}(h,S^{-1}(l)k)$,

\item $\sigma(h^{(1)}, k^{(1)})\bar{\sigma}(h^{(2)}, k^{(2)})= h\xcdot (k\xcdot 1_A)$ and $\bar{\sigma}(h^{(1)}, k^{(1)})\sigma(h^{(2)}, k^{(2)})= hk\xcdot 1_A$,

\end{enumerate}
for all $h,k\in H$ and $l\in H^{\!L}$.
\end{definition}

\begin{remark}\label{rere1q} By item~(13) and~(16), we have
\begin{align*}
\bar{\sigma}(h^{(1)},k^{(1)}) (h^{(2)}\cdot (k^{(2)}\cdot 1_A)) & = \bar{\sigma}(h^{(1)}, k^{(1)})\sigma(h^{(2)}, k^{(2)}) \bar{\sigma}(h^{(3)},k^{(3)})\\
& = (h^{(1)}k^{(1)}\cdot 1_A) \bar{\sigma}(h^{(2)},k^{(2)})\\
& = \bar{\sigma}(h,k).
\end{align*}
An standard computation using this shows that $\bar{\sigma}$, which is called the {\em inverse of $\sigma$}, is unique.
\end{remark}

\begin{remark}\label{es mas suficiente} It $\bar{\sigma}\colon H\ot_{H^{\!L}} H\to A$ satisfies items~(14)--(16), then the map $\tilde{\sigma}\colon H\ot_{H^{\!L}} H\to A$, defined by $\tilde{\sigma}(h,k)\coloneqq (h^{(1)}k^{(1)}\cdot 1_A) \bar{\sigma}(h^{(2)},k^{(2)})$, satisfies items~(13)--(16).
\end{remark}

\begin{align*}
\tilde{\sigma}(h,k) & = (h^{(1)}k^{(1)}\cdot 1_A) \bar{\sigma}(h^{(2)},k^{(2)})\\
& = (h^{(1)}k^{(1)}\cdot 1_A) (h^{(2)}k^{(2)}\cdot 1_A) \bar{\sigma}(h^{(3)},k^{(3)})\\
& = (h^{(1)}k^{(1)}\cdot 1_A) \tilde{\sigma}(h^{(2)},k^{(2)}).
\end{align*}

\subsection[Crossed product $A\times^{\rho}_{\varsigma} H$]{Crossed product $\bm{A\times^{\rho}_{\varsigma} H}$}\label{subsection: espanholes}
In this subsection we review the construction of weak crossed products over weak Hopf algebras introduced in~\cite{AG}. As in the previous subsection~$A$ is a $k$-algebra, $H$ is a weak Hopf algebra, $\rho\colon H\ot A\to A$ is a map and we set $h\xcdot a\coloneqq \rho(h\ot a)$.

\begin{definition}\label{def modulo algebra debil} We say that $A$ is a {\em left weak $H$-module algebra} via $\rho$, if it satisfies conditions~(2), (4) and~(11). In this case we say that $\rho$ is a left weak action of $H$ on $A$.
\end{definition}

In the rest of this subsection we assume that $A$ is a left weak $H$-module algebra via $\rho$.

\begin{definition}\label{modulo torcido y de cociclo} Let $\varsigma\colon H\ot H\longrightarrow A$ be a map. We say that:

\begin{enumerate}[itemsep=0.7ex, topsep=1.0ex, label=(\arabic*),start=17, leftmargin=0.9cm]

\item $\varsigma$ {\em is a cocycle} if
$$
\quad\qquad (h^{(1)}\xcdot \varsigma(k^{(1)}\ot m^{(1)})) \varsigma(h^{(2)}\ot k^{(2)}m^{(2)})=\varsigma(h^{(1)}\ot k^{(1)})\varsigma(h^{(2)}k^{(2)}\ot m) \quad\text{for all $h,k,m\in H$.}
$$

\item $\varsigma$ satisfies the {\em twisted module condition} if
$$
\qquad (h^{(1)}\xcdot (k^{(1)}\xcdot a)) \varsigma(h^{(2)}\ot k^{(2)})=\varsigma(h^{(1)}\ot k^{(1)})(h^{(2)}k^{(2)}\xcdot a)\quad\text{for all $h,k\in H$ and $a\in A$.}
$$
\end{enumerate}
\end{definition}

Let $\nu\colon k\to A\ot H$ be the map defined by $\nu(\lambda)\coloneqq \lambda 1^{(1)}\xcdot 1_A\ot 1^{(2)}$. In the sequel we assume that

\begin{enumerate}[itemsep=0.7ex, topsep=1.0ex, label=(\arabic*),start=19, leftmargin=0.9cm]

\item $\varsigma(h\ot k) = \varsigma(h^{(1)}\ot k^{(1)})(h^{(2)}k^{(2)}\xcdot 1_A)$,

\item $h\xcdot 1_A = (h^{(1)}\xcdot (1^{(1)}\xcdot 1_A))\varsigma(h^{(2)} \ot 1^{(2)})$,

\item $h\xcdot 1_A = (1^{(1)}\xcdot 1_A)\varsigma(1^{(2)}\ot h)$,

\item $a(1^{(1)}\xcdot 1_A)\ot 1^{(2)} = 1^{(1)}\xcdot a\ot 1^{(2)}$,

\end{enumerate}
for all $h,k\in H$ and $a\in A$.


\begin{remark}\label{pepe} Under conditions~(18) and~(19), conditions~(20) and~(21) are equivalent to the fact that $\varsigma(1\ot h)=\varsigma(h\ot 1)=h\xcdot 1_A$ for all $h\in H$\footnote{A map $\varsigma$ satisfying this condition is called \emph{normal}.} (see \cite{GGV}*{Remarks~2.17 and~2.19}).
\end{remark}

\begin{remark}\label{pepe'} From condition~(11), (18) and~(19) it follows that
$$
(h^{(1)}k^{(1)}\xcdot 1_A)\varsigma(h^{(2)}\ot k^{(2)})\! =\! (h^{(1)}\xcdot (k^{(1)}\xcdot 1_A))\varsigma(h^{(2)}\ot k^{(2)})\!=\! \varsigma(h^{(1)}\ot k^{(1)})(h^{(2)}k^{(2)}\xcdot 1_A) = \varsigma(h\ot k).
$$
for all $h,k\in H$
\end{remark}

Let $A\times H$ be the image of the map $\nabla_{\!\rho} \colon A\ot H\to A\ot H$, defined by $\nabla_{\!\rho}(a\ot h)\coloneqq a(h^{(1)}\xcdot 1_A)\ot h^{(2)}$. From now on, for each simple tensor $a\ot h\in A\ot H$, we set $a\times h\coloneqq \nabla_{\!\rho}(a\ot h)$.

\begin{theorem}\label{teor: producto cruzado espanhol} Under conditions~(2), (4), (11) and~(17)--(25), the $k$-vector space $A\times H$ is an associative algebra with unit $1_A\times 1$, via
$$
(a\times h)(b\times k)\coloneqq a (h^{(1)}\cdot b)\varsigma(h^{(2)}\ot k^{(1)}) \times h^{(3)}k^{(2)}.
$$
\end{theorem}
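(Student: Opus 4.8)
The plan is to realize $A\times H$ as a retract of $A\ot H$ and transport the multiplication there. Write $\nabla\coloneqq\nabla_{\!\rho}$ and let $\mu\colon(A\ot H)\ot(A\ot H)\to A\ot H$ be the $k$-linear map determined by $\mu\bigl((a\ot h)\ot(b\ot k)\bigr)=a(h^{(1)}\xcdot b)\varsigma(h^{(2)}\ot k^{(1)})\ot h^{(3)}k^{(2)}$. First I would check that $\nabla$ is idempotent: condition~(2) applied to $1_A=1_A1_A$ gives $(g^{(1)}\xcdot 1_A)(g^{(2)}\xcdot 1_A)=g\xcdot 1_A$ for all $g\in H$, whence $\nabla^2(a\ot h)=a(h^{(1)}\xcdot 1_A)(h^{(2)}\xcdot 1_A)\ot h^{(3)}=a(h^{(1)}\xcdot 1_A)\ot h^{(2)}=\nabla(a\ot h)$. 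Thus $\nabla$ is a projection of $A\ot H$ onto $A\times H=\ima(\nabla)$ that restricts to the identity on $A\times H$, and the proposed unit is $1_A\times 1=\nabla(1_A\ot 1)=1^{(1)}\xcdot 1_A\ot 1^{(2)}$. Second, I would show that $\mu$ maps $A\ot H\ot A\ot H$ into $A\times H$, so that in particular its restriction to $(A\times H)\ot(A\times H)$ takes values in $A\times H$ and hence defines an operation there: applying $\nabla$ to $\mu\bigl((a\ot h)\ot(b\ot k)\bigr)$ inserts a factor $(h^{(3)}k^{(2)}\xcdot 1_A)$ just before the tensor symbol, and condition~(19) (equivalently Remark~\ref{pepe'}), after re-indexing the coproducts of $h^{(2)}$ and $k^{(1)}$, gives $\varsigma(h^{(2)}\ot k^{(1)})(h^{(3)}k^{(2)}\xcdot 1_A)=\varsigma(h^{(2)}\ot k^{(1)})$, so $\nabla\xcirc\mu=\mu$. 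We keep the notation $\mu$ for the induced operation on $A\times H$.

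The heart of the proof is the associativity of $\mu$ on $A\times H$, and this is the step I expect to be the main obstacle. Using that a general element of $A\times H$ has the form $a\times h=a(h^{(1)}\xcdot 1_A)\ot h^{(2)}$, I would expand both $\bigl((a\times h)(b\times k)\bigr)(c\times m)$ and $(a\times h)\bigl((b\times k)(c\times m)\bigr)$ in Sweedler notation, using $\Delta(xy)=\Delta(x)\Delta(y)$, coassociativity and~\eqref{propiedad de 1} to keep the coproduct indices aligned, condition~(2) to distribute the actions $h^{(i)}\xcdot(-)$ over products in $A$, and conditions~(4), (11) together with~(19)--(25) to absorb the superfluous idempotents $(-\xcdot 1_A)$ and the actions of $H^{\!L}$. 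After these reductions the two $\varsigma$-factors in each expansion match one side of the cocycle identity~(17), namely $(h^{(1)}\xcdot\varsigma(k^{(1)}\ot m^{(1)}))\varsigma(h^{(2)}\ot k^{(2)}m^{(2)})=\varsigma(h^{(1)}\ot k^{(1)})\varsigma(h^{(2)}k^{(2)}\ot m)$, while the twisted module condition~(18) lets one rewrite $(h^{(1)}\xcdot(k^{(1)}\xcdot c))\varsigma(h^{(2)}\ot k^{(2)})$ as $\varsigma(h^{(1)}\ot k^{(1)})(h^{(2)}k^{(2)}\xcdot c)$; matching the resulting terms then shows the two triple products coincide. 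The only delicacy is the bookkeeping, i.e. arranging the coproduct components so that~(17) and~(18) apply at exactly the right places, but no new idea is needed.

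Finally I would verify the unit axioms. For $(1_A\times 1)(b\times k)=b\times k$, expand with $1_A\times 1=1^{(1)}\xcdot 1_A\ot 1^{(2)}$ and~\eqref{propiedad de 1}, use condition~(22) to move $b$ past $1^{(1)}\xcdot 1_A$ in the $A$-slot, condition~(4), and the normalization $\varsigma(1\ot k)=k\xcdot 1_A$ provided by Remark~\ref{pepe} (from conditions~(20)--(21)); this collapses the expression to $b(k^{(1)}\xcdot 1_A)\ot k^{(2)}=b\times k$. The identity $(a\times h)(1_A\times 1)=a\times h$ is symmetric, using instead $\varsigma(h\ot 1)=h\xcdot 1_A$ and condition~(2). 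Together with the previous paragraph, this shows that $(A\times H,\mu,1_A\times 1)$ is an associative unital algebra, which is the assertion of the theorem.
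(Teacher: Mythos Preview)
The paper does not give its own proof of this theorem: it appears in the preliminaries section, where the authors explicitly say they are \emph{reviewing} the construction of~\cite{AG} (see also~\cites{FGR,AFGR3}), and the statement is recorded without argument. So there is nothing in the paper to compare your attempt against; the theorem is quoted from the literature.

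That said, your sketch is essentially the standard proof one finds in those references and it is correct. A couple of remarks that would sharpen it. First, in your associativity step you may as well prove that $\mu$ is associative on all of $A\ot H$, not just on $A\times H$: expanding both $\mu(\mu\ot\ide)$ and $\mu(\ide\ot\mu)$ and using only~(2), (17) and~(18) in the order you indicate (first~(18) to commute $\varsigma$ past $h\xcdot(k\xcdot c)$, then~(17) on the pair of $\varsigma$'s, then~(2) to recombine the $h$-action) already gives equality, with no need to restrict to the image of $\nabla_{\!\rho}$ or to invoke~(19)--(22) at that stage. This makes the bookkeeping you flag as ``the only delicacy'' entirely mechanical. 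Second, your unit verifications are fine; note that for the left unit you really only need~(4), (22) and the normalization $\varsigma(1\ot k)=k\xcdot 1_A$ from Remark~\ref{pepe}, and for the right unit~(2), (11) and $\varsigma(h\ot 1)=h\xcdot 1_A$. Conditions numbered~(23)--(25) in the theorem's hypothesis are not actually used for the algebra structure (and indeed~(25) does not appear elsewhere in the paper); this is a harmless overstatement in the hypotheses as printed.
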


\begin{definition}\label{def: producto cruzado espanhol}
The algebra constructed in Theorem~\ref{teor: producto cruzado  espanhol} is denoted $A\times^{\rho}_{\varsigma} H$ and is called {\em the weak crossed product of $A$ with $H$ associated with $\rho$ and $\varsigma$}.
\end{definition}

We consider $A\times^{\rho}_{\varsigma} H$ as a left $A$-module via the natural action and as a right $H$-comodule via the map $\delta\colon A\times^{\rho}_{\varsigma} H\to A\times^{\rho}_{\varsigma} H\ot H$, defined by $\delta(a\times h)\coloneqq (a\times h^{(1)}) \ot h^{(2)}$.

\begin{definition}\label{cociclo inversible espanhol} Assume that conditions~(2), (4), (11) and~(17)--(25) are fulfilled. We say that $\varsigma\colon H\ot H\to A$ is {\em invertible} if there exists a map $\bar{\varsigma}\colon H\ot H\to A$ satisfying
\begin{enumerate}[itemsep=0.7ex, topsep=1.0ex, label=(\arabic*), start=23, leftmargin=0.9cm]

\item $\bar{\varsigma}(h,k) = (h^{(1)}k^{(1)}\cdot 1_A) \bar{\varsigma}(h^{(2)}\ot k^{(2)}) $,


\item $\varsigma(h^{(1)}\ot k^{(1)})\bar{\varsigma}(h^{(2)}\ot k^{(2)})= \bar{\varsigma}(h^{(1)}\ot k^{(1)})\varsigma(h^{(2)}\ot k^{(2)})= hk\xcdot 1_A$,

\end{enumerate}
for all $h,k\in H$.
\end{definition}

The map $\bar{\varsigma}$ is unique, satisfies $\bar{\varsigma}(h \ot k)= (h^{(1)}k^{(1)}\xcdot 1_A)\bar{\varsigma}(h^{(2)}\ot k^{(2)})$ and is called the {\em inverse of $\varsigma$} (see the discussion below \cite{GGV}*{Definition~5.6}).

\section{The comparison}

Let $A$ be a $k$-algebra, $H$ be a weak Hopf algebra and $\rho\colon H\ot A\to A$ a map. As in Sub\-sec\-tions~\ref{subsection: Bhom y Brzezinksi} and~\ref{subsection: espanholes} we set $h\xcdot a\coloneqq \rho(h\ot a)$.

\subsection[Crossed products $A\#^{\rho}_{\sigma} H$ that induce crossed products $A\times^{\rho}_{\varsigma} H$]{Crossed products $\bm{A\#^{\rho}_{\sigma} H}$ that induce crossed products $\bm{A\times^{\rho}_{\varsigma} H}$}
The aim of this subsection is to prove the following result:

\begin{theorem}\label{Bhom y Brzezinski son espanholes} Let $\sigma\colon H\ot_{H^{\!R}} H\to A$  be a map and let $\varsigma \coloneqq \sigma\circ p$, where $p\colon H\ot H \to H\ot_{H^{\!R}} H$ is the canonical surjection. If the maps $\rho$ and $\sigma$ satisfy conditions (1)--(10) then $\rho$ and $\varsigma$ satisfy conditions~(2), (4), (11) and~(17)--(22).
\end{theorem}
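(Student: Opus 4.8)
The plan is to check the seven target conditions one at a time, using throughout the dictionary provided by $\varsigma=\sigma\xcirc p$: for all $h,k\in H$ one has $\varsigma(h\ot k)=\sigma(h\ot_{H^{\!R}}k)$, which in the paper's notation is $\sigma(h,k)$. Under this dictionary most of the target conditions become verbatim re-readings of hypotheses~(1)--(10). Indeed, (2) and~(4) are among the hypotheses (Definition~\ref{medida de H sobre un algebra}), so there is nothing to do; (11) is immediate from Proposition~\ref{212 y 213}, since hypotheses~(1), (2), (6) and~(7) hold and~(10) is assumed, so (10) and~(11) are equivalent; condition~(17) (that $\varsigma$ is a cocycle) is exactly hypothesis~(8) rewritten with $\varsigma(x\ot y)$ in place of $\sigma(x,y)$, and the twisted module condition~(18) for $\varsigma$ is exactly hypothesis~(9); and condition~(19) for $\varsigma$ reads $\sigma(h,k)=\sigma(h^{(1)},k^{(1)})(h^{(2)}k^{(2)}\xcdot 1_A)$, which is the second assertion of Proposition~\ref{pese}.

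Conditions~(20) and~(21) require only a small specialization. For~(21), I would take hypothesis~(6) with $h=1$ and $l=1$ (noting $1\in H^{\!L}$): its left-hand side becomes $\bigl(1^{(1)}\xcdot(1\xcdot 1_A)\bigr)\sigma(1^{(2)},k)=(1^{(1)}\xcdot 1_A)\sigma(1^{(2)},k)$ by~(4), while its right-hand side is $\sigma(1,k)=k\xcdot 1_A$ by~(7); since $\sigma(1^{(2)},k)=\varsigma(1^{(2)}\ot k)$, this is precisely~(21). For~(20), I would take the twisted module condition~(9) with $k=1$ and $a=1_A$, obtaining $\bigl(h^{(1)}\xcdot(1^{(1)}\xcdot 1_A)\bigr)\sigma(h^{(2)},1^{(2)})=\sigma(h^{(1)},1^{(1)})(h^{(2)}1^{(2)}\xcdot 1_A)$; by Proposition~\ref{pese} applied with $k=1$ the right-hand side equals $\sigma(h,1)$, which in turn is $h\xcdot 1_A$ by~(7), and $\sigma(h^{(2)},1^{(2)})=\varsigma(h^{(2)}\ot 1^{(2)})$, so this is~(20).

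The only genuinely structural point is~(22), which involves $\rho$ alone, and I expect it to be the main obstacle. The plan is to use that $(\Pi^R\ot\ide)\Delta(1)=\Delta(1)$ in any weak bialgebra (a standard consequence of~\eqref{propiedad de 1}), so that $\Delta(1)\in H^{\!R}\ot_k H$; writing $\Delta(1)=\sum_\beta p_\beta\ot q_\beta$ with $p_\beta\in H^{\!R}$, and using that the bijective antipode interchanges $H^{\!L}$ and $H^{\!R}$, we may put $p_\beta=S^{-1}(l_\beta)$ with $l_\beta\in H^{\!L}$. Then hypothesis~(3), in the form $S^{-1}(l)h\xcdot a=(h\xcdot a)(l\xcdot 1_A)$ with $h=1$, together with~(4), gives $p_\beta\xcdot a=(1\xcdot a)(l_\beta\xcdot 1_A)=a(l_\beta\xcdot 1_A)$, and taking $a=1_A$ gives $p_\beta\xcdot 1_A=l_\beta\xcdot 1_A$; hence $p_\beta\xcdot a=a(p_\beta\xcdot 1_A)$. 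Tensoring with $q_\beta$ and summing over $\beta$ yields $1^{(1)}\xcdot a\ot 1^{(2)}=a(1^{(1)}\xcdot 1_A)\ot 1^{(2)}$, which is condition~(22). The delicate part here is picking the correct one of the two identities in~(3): it must be the $S^{-1}$-one, because it is the \emph{first} leg of $\Delta(1)$ that lies in $H^{\!R}$; once that is pinned down, the rest is routine Sweedler bookkeeping. I note in passing that hypothesis~(5) does not seem to be needed for this implication.
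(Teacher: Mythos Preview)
Your proof is correct and for most of the conditions coincides with the paper's: (2) and (4) are hypotheses, (11) comes from Proposition~\ref{212 y 213}, (17)--(19) are direct transcriptions of (8), (9) and Proposition~\ref{pese}, and your derivation of (21) from condition~(6) with $h=l=1$ is exactly what the paper does. The arguments for (20) and (22), however, follow genuinely different routes. For (20) the paper invokes Proposition~\ref{212 y 213} (both the equivalence (10)$\Leftrightarrow$(11) and (10)$\Leftrightarrow$(12)) to rewrite $h^{(1)}\xcdot(1^{(1)}\xcdot 1_A)$ as $h^{(1)}1^{(1)}\xcdot 1_A$ and $\sigma(h^{(2)},1^{(2)})$ as $\sigma(h^{(2)}1^{(2)},1)$, then finishes with (7), $\Delta(h)\Delta(1)=\Delta(h)$, and (2); your route via the twisted module condition~(9) with $k=1$, $a=1_A$ combined with Proposition~\ref{pese} and~(7) is shorter and does not call on~(10) at that step. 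For (22) the paper stays entirely within the measuring axioms, combining (2), (11) and the weak-bialgebra identity~\eqref{propiedad de 1} for $\Delta^2(1)$; you instead exploit $\Delta(1)\in H^{\!R}\ot H$ and the second identity in~(3) (together with the bijectivity of $S$) to prove the elementwise fact $r\xcdot a=a(r\xcdot 1_A)$ for every $r\in H^{\!R}$. Your approach makes transparent that (22) is an $H^{\!R}$-linearity statement encoded in~(3), while the paper's avoids the antipode and any choice of tensor decomposition of $\Delta(1)$. Your closing observation that hypothesis~(5) is not needed is also borne out by the paper's argument.
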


From here to the end of this subsection we assume that conditions~(1)--(10) are fulfilled.

\begin{proof}[Proof of Theorem~\ref{Bhom y Brzezinski son espanholes}] Items~(2) and~(4) are trivially fulfilled. By Proposition~\ref{212 y 213}, item~(11) is fulfilled. Items~(17) and~(18) follow from items~(8) and~(9) respectively, while item~(19) follows from Proposition~\ref{pese}. By Proposition~\ref{212 y 213} and items~(2), (7) and~(10), we have
$$
(h^{(1)}\xcdot (1^{(1)}\xcdot 1_A))\varsigma(h^{(2)} \ot 1^{(2)})= (h^{(1)}1^{(1)}\xcdot 1_A)\sigma(h^{(2)}1^{(2)}, 1)=(h^{(1)}\xcdot 1_A)(h^{(2)}\xcdot 1_A)= h\xcdot 1_A,
$$
which proves that item~(20) holds. By items~(4), (6) and~(7), we have
$$
(1^{(1)}\xcdot 1_A)\varsigma(1^{(2)}\ot h)= (1^{(1)}\xcdot (1\cdot 1_A))\sigma(1^{(2)},h)=  \sigma(1,h) = h\xcdot 1_A,
$$
which proves item~(21). Finally, in order to check item~(22), it suffices to note that by Equality~\eqref{propiedad de 1} and items~(2) and~(11),
\begin{align*}
(1^{(1)}\xcdot 1_A)(1^{(2)}\xcdot a)\ot 1^{(3)} & = (1^{(1)}\xcdot a)(1^{(2)}\xcdot 1_A)\ot 1^{(3)}\\
& = (1^{(1)}\xcdot a)(1^{(2)}\xcdot (1^{(1')}\xcdot 1_A))\ot 1^{(2')}\\
&= a(1^{(1)}\xcdot 1_A)\ot 1^{(2)},
\end{align*}
as desired.
\end{proof}

\begin{proposition}\label{sigma inversible implica varsigma inversible} Under the hypotheses of Theorem~\ref{Bhom y Brzezinski son espanholes}, if $\sigma$ is invertible, then $\varsigma$ is also.
\end{proposition}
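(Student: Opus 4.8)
The plan is to transport the inverse of $\sigma$ to an inverse of $\varsigma$ through the canonical surjections. Write $\bar{p}\colon H\ot H\to H\ot_{H^{\!L}} H$ for the quotient map and set $\bar{\varsigma}\coloneqq\bar{\sigma}\circ\bar{p}$, so that $\bar{\varsigma}(h\ot k)=\bar{\sigma}(h,k)$ for all $h,k\in H$ (no well-definedness needs to be checked, since $\bar{\sigma}$ is already a map out of the quotient). By Theorem~\ref{Bhom y Brzezinski son espanholes} the pair $(\rho,\varsigma)$ satisfies conditions~(2), (4), (11) and~(17)--(22), so the notion of invertibility of $\varsigma$ from Definition~\ref{cociclo inversible espanhol} applies, and it remains only to verify conditions~(23) and~(24) for $\bar{\varsigma}$. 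Once this is done, the uniqueness assertion stated just after Definition~\ref{cociclo inversible espanhol} shows that $\bar{\varsigma}$ is indeed \emph{the} inverse of $\varsigma$.

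First I would check~(23). Since $\varsigma=\sigma\circ p$ and $\bar{\varsigma}=\bar{\sigma}\circ\bar{p}$, item~(13) applied to $\bar{\sigma}$ gives
$$
\bar{\varsigma}(h\ot k)=\bar{\sigma}(h,k)=(h^{(1)}k^{(1)}\xcdot 1_A)\,\bar{\sigma}(h^{(2)},k^{(2)})=(h^{(1)}k^{(1)}\xcdot 1_A)\,\bar{\varsigma}(h^{(2)}\ot k^{(2)}),
$$
which is exactly~(23). Next, for~(24), the two identities in item~(16), read through the identifications $\varsigma(x\ot y)=\sigma(x,y)$ and $\bar{\varsigma}(x\ot y)=\bar{\sigma}(x,y)$, give
$$
\varsigma(h^{(1)}\ot k^{(1)})\,\bar{\varsigma}(h^{(2)}\ot k^{(2)})=\sigma(h^{(1)},k^{(1)})\,\bar{\sigma}(h^{(2)},k^{(2)})=h\xcdot(k\xcdot 1_A)
$$
and
$$
\bar{\varsigma}(h^{(1)}\ot k^{(1)})\,\varsigma(h^{(2)}\ot k^{(2)})=\bar{\sigma}(h^{(1)},k^{(1)})\,\sigma(h^{(2)},k^{(2)})=hk\xcdot 1_A.
$$
Applying condition~(11)---which holds because condition~(10) is among the hypotheses of Theorem~\ref{Bhom y Brzezinski son espanholes}, by Proposition~\ref{212 y 213}---to the first of these equalities turns its right-hand side into $hk\xcdot 1_A$ as well, so both displayed products equal $hk\xcdot 1_A$ and~(24) is established. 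Hence $\varsigma$ is invertible, with inverse $\bar{\varsigma}$.

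I do not expect any genuine obstacle here: every step is either the translation of an identity for $\sigma$ and $\bar{\sigma}$ along the canonical surjections $p$ and $\bar{p}$, or a direct use of the already available conditions~(11) and~(16). The only two points that warrant a moment's attention are the replacement of $h\xcdot(k\xcdot 1_A)$ by $hk\xcdot 1_A$ via~(11), and the observation that all the prerequisites needed merely to state Definition~\ref{cociclo inversible espanhol} for $\varsigma$ are already supplied by Theorem~\ref{Bhom y Brzezinski son espanholes}.
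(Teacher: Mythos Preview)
Your argument is correct and coincides with the paper's own proof: the paper also sets $\tilde{\varsigma}\coloneqq \bar{\sigma}\circ q$ (your $\bar{\varsigma}=\bar{\sigma}\circ\bar{p}$) and notes that items~(11), (13) and~(16) yield conditions~(23) and~(24). You have simply written out in full the computations that the paper compresses into a single sentence.
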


\begin{proof} Let $\bar{\sigma}$ be the inverse of $\sigma$ and let $\tilde{\varsigma}\coloneqq \bar{\sigma}\xcirc q$, where $q\colon H\ot H \to H\ot_{H^{\!L}} H$ is the canonical surjection. From items~(11), (13) and~(16) it follows that~$\tilde{\varsigma}$ satisfies items~(23) and~(24).
\end{proof}

\subsection[Crossed product $A\#^{\rho}_{\sigma} H$ associated with a crossed product  $A\times^{\rho}_{\varsigma} H$]{Crossed product $\bm{A\#^{\rho}_{\sigma} H}$ associated with a crossed product  $\bm{A\times^{\rho}_{\varsigma} H}$}
The aim of this subsection is to prove the following result:

\begin{theorem}\label{espanholes implica Bhom y Brzezinksi} Let $\varsigma\colon H\ot H\to A$  be a map.  If the maps $\rho$ and $\varsigma$ satisfy conditions~~(2), (4), (11) and~(17)--(22), then $\varsigma$ factorizes throughout a map $\sigma\colon H\ot_{H^{\!R}} H\to A$ and the maps $\rho$ and $\sigma$ satisfy conditions~(1)--(9).
\end{theorem}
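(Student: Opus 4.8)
The plan is to mirror the structure of the proof of Theorem~\ref{Bhom y Brzezinski son espanholes} but in reverse. First I would establish the factorization: I must show that $\varsigma(h\ot k)$ depends only on the class $h\ot_{H^{\!R}} k$, i.e. that $\varsigma(hr\ot k)=\varsigma(h\ot rk)$ for all $r\in H^{\!R}$. Since $H^{\!R}=\ima(\Pi^R)$, it suffices to prove this for $r=\Pi^R(l)$, and using the relation between $\Pi^R$, $S$ and $H^{\!L}$ one reduces to producing identities of the form $\varsigma(S^{-1}(l)h\ot k)=\varsigma(h\ot k)(l\xcdot 1_A)$ and $\varsigma(h\ot lk)=(h^{(1)}\xcdot(l\xcdot 1_A))\varsigma(h^{(2)}\ot k)$ from conditions~(2), (4), (11), (17)--(22). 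The cocycle condition~(17), the normality $\varsigma(1\ot h)=\varsigma(h\ot 1)=h\xcdot 1_A$ guaranteed by Remark~\ref{pepe}, together with Remark~\ref{pepe'}, should give exactly the $H^{\!R}$-bilinearity needed; this is the one place where some genuine manipulation with Sweedler indices is required, and I expect it to be the main obstacle, since one has to juggle the idempotents $\Pi^L,\Pi^R$ and the antipode carefully to land on the canonical-surjection kernel.

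Once $\sigma$ is defined by $\varsigma=\sigma\xcirc p$, the remaining verifications are essentially translations. Condition~(1), $h\xcdot 1_A=\Pi^L(h)\xcdot 1_A$, follows from condition~(4) (unitality) together with conditions~(11) and~(20), (21): writing $h\xcdot 1_A$ via~(21) as $(1^{(1)}\xcdot 1_A)\varsigma(1^{(2)}\ot h)$ and using Remark~\ref{pepe} plus~(11) to rewrite $\Pi^L(h)\xcdot 1_A$; alternatively one deduces~(1) directly from~(11) with $l=\Pi^L(h)$ after checking $h\Pi^L(h)$ reduces correctly. Conditions~(2) and~(4) are hypotheses. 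Condition~(3) (the two $H^{\!L}$-linearity statements for $\rho$) should follow from condition~(11) together with the antipode axioms $S(h^{(1)})h^{(2)}=\Pi^R(h)$ and $h^{(1)}S(h^{(2)})=\Pi^L(h)$ and the fact that $A$ is a left weak $H$-module algebra; here one uses~(2) to split products and~(11) to collapse $h\xcdot(l\xcdot 1_A)$ into $hl\xcdot 1_A$.

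For the cocycle conditions~(5)--(9) on $\sigma$: condition~(8) is exactly the cocycle condition~(17) transported through $p$; condition~(9) is the twisted module condition~(18); condition~(7), $\sigma(1,h)=\sigma(h,1)=h\xcdot 1_A$, is the normality from Remark~\ref{pepe}. Condition~(5), the $H^{\!L}$-bi-linearity $\sigma(lh,k)=(l\xcdot 1_A)\sigma(h,k)$ and $\sigma(S^{-1}(l)h,k)=\sigma(h,k)(l\xcdot 1_A)$, should come from combining~(17), (19), normality, and the weak-module-algebra axioms — it is the analogue of deriving item~(5) from item~(8) in the $\#$ setting and will need a short computation specializing one argument of the cocycle identity to $1$. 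Finally condition~(6), $(h^{(1)}\xcdot(l\xcdot 1_A))\sigma(h^{(2)},k)=\sigma(h,lk)$, follows from~(11) and the cocycle/normality package in the same way: use~(17) with $k=\Pi^L$-type element and~(20)--(21) to introduce the $l$, exactly paralleling how~(6) was used in Proposition~\ref{pese}. I would organize the write-up as: (i) factorization of $\varsigma$ through $p$; (ii) conditions~(1)--(4) on $\rho$; (iii) conditions~(5)--(9) on $\sigma$, each deduced from the correspondingly-numbered Spanish-side condition by a one- or two-line Sweedler computation.
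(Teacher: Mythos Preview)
Your overall strategy—verify each of (1)--(9) by translating the corresponding ``Spanish-side'' condition—is exactly the paper's, and your handling of (2), (4), (7), (8), (9) is right. But there is a genuine gap in your plan for item~(3). You propose to obtain $lh\cdot a=(l\cdot 1_A)(h\cdot a)$ and its $S^{-1}$-variant from (2), (4), (11) and the antipode identities alone; that does not work, because (11) only controls the action on $1_A$, and there is no way to bootstrap to general $a$ without the twisted module condition~(18). The paper makes this explicit: it first proves the auxiliary Lemma~\ref{condicion'}, namely $k\cdot(h\cdot a)=kh\cdot a$ for $k\in H^{\!L}\cup H^{\!R}$, and the proof of that lemma uses~(18) in an essential way (one inserts $\varsigma(k^{(2)}\ot h^{(2)})$ via normality and the fact that $\Delta(k)\in H^{\!R}H^{\!L}\ot H^{\!R}H^{\!L}$, applies~(18) to swap, and collapses back). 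Only then does item~(3) follow, together with the identification $(h\cdot a)(l\cdot 1_A)=\ov{\Pi}^L(l)\cdot(h\cdot a)$ from \cite{GGV}. So your sketch for (3) is missing the key ingredient.

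Two smaller points. First, your factorization argument is aimed at the wrong identities: showing $\varsigma(S^{-1}(l)h\ot k)=\varsigma(h\ot k)(l\cdot 1_A)$ puts $S^{-1}(l)$ on the \emph{left} of $h$, whereas $H^{\!R}$-balance requires $\varsigma(hr\ot k)=\varsigma(h\ot rk)$ with $r$ on the \emph{right}. The paper gets the factorization much more cheaply, directly from~(19) (via \cite{GGV}*{Propositions~2.4 and~2.7}); you should use~(19), not the cocycle identity, here. Second, for items~(5) and~(6) the paper isolates another short lemma (Lemma~\ref{(u_1 ot epsilon)* sigma }): $(h^{(1)}\cdot 1_A)\varsigma(h^{(2)}\ot k)=\varsigma(h^{(1)}\ot k)(h^{(2)}\cdot 1_A)=\varsigma(h\ot k)$, proved from Remark~\ref{pepe'} and~(2), (11), (19). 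With this and the already-established item~(3) in hand, (5) follows by expanding $\Delta(l)$ for $l\in H^{\!L}$ via \cite{BNS1}*{(2.7a),(2.7b)}, and (6) follows using~(11) and \cite{GGV}*{Proposition~2.8}. Your idea of ``specializing one argument of~(17) to $1$'' is not how the paper does it and would require you to reprove these lemmas anyway.
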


From here to the end of this subsection we assume that conditions~(2), (4), (11) and~(17)--(22) are fulfilled.

\begin{lemma}\label{condicion'}
The equality $k\xcdot (h\xcdot a)=kh\xcdot a$ holds for all $k\in H^{L}\cup H^{\!R}$, $h\in H$ and $a\in A$.
\end{lemma}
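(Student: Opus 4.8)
The plan is to split the statement into its two cases, $k\in H^{\!L}$ and $k\in H^{\!R}$, and reduce each to an identity involving only the weak action~$\rho$ and the structure maps~$\Pi^L,\Pi^R$, without ever invoking the cocycle~$\varsigma$. Observe first that by condition~(2) we have $k\xcdot(h\xcdot a)=k\xcdot((h\xcdot 1_A)(h'\xcdot a))$ for the appropriate Sweedler legs, so it suffices to understand how $k$ interacts with a product of the form $(x\xcdot 1_A)(y\xcdot a)$; and by condition~(4) we already know the unital normalization. The real content is thus an analogue of Definition~\ref{medida de H sobre un algebra}(3) holding for the weak action, which a priori only satisfies (2), (4), (11).

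First I would treat $k\in H^{\!R}$. Writing $k=\Pi^R(k)$ and using the defining formula $\Pi^R(k)=1^{(1)}\epsilon(k1^{(2)})$ together with condition~(22) (the "$A$-linearity over $H^{\!R}$" relation $a(1^{(1)}\xcdot 1_A)\ot 1^{(2)}=1^{(1)}\xcdot a\ot 1^{(2)}$), I would move the element $1^{(1)}$ past $h\xcdot a$ and collapse the counit. Concretely, $k\xcdot(h\xcdot a)=\epsilon(k1^{(2)})\,1^{(1)}\xcdot(h\xcdot a)$; applying condition~(2) to $1^{(1)}$, using $\Delta^2(1)$ from Equality~\eqref{propiedad de 1}, and then using~(22) to absorb $1^{(1)}\xcdot 1_A$, one should be able to rewrite the right-hand side as $(1^{(1)}h\xcdot a)$ weighted by $\epsilon(k1^{(2)})$, which by the formula for $\Pi^R$ and condition~(11) (in the form $h'\xcdot(l\xcdot 1_A)=h'l\xcdot 1_A$) equals $\Pi^R(k)h\xcdot a=kh\xcdot a$.

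For $k\in H^{\!L}$ I would run the dual argument, writing $k=\Pi^L(k)=\epsilon(1^{(1)}k)1^{(2)}$. Here the natural tool is condition~(11), which directly gives $h'\xcdot(l\xcdot 1_A)=h'l\xcdot 1_A$ for $l\in H^{\!L}$, hence $k\xcdot(h\xcdot 1_A)=kh\xcdot 1_A$ is essentially immediate; the point is to upgrade this from $a=1_A$ to general~$a$. Using condition~(2) to split $h\xcdot a=(h^{(1)}\xcdot 1_A)(h^{(2)}\xcdot a)$, then condition~(2) again on $k$, I would reduce to controlling $k^{(1)}\xcdot(h^{(1)}\xcdot 1_A)$ and $k^{(2)}\xcdot(h^{(2)}\xcdot a)$ separately; the first is handled by~(11), and for the second I would again feed in the $H^{\!R}$-case already proved (since $\Pi^L$ and $\Pi^R$ have overlapping images modulo the structure), or more directly use~(22) together with Equality~\eqref{propiedad de 1} to relocate the $1$-legs. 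Reassembling via the comultiplication identities gives $kh\xcdot a$.

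The main obstacle I anticipate is bookkeeping: there is no hypothesis labelled like Definition~\ref{medida de H sobre un algebra}(3) available, so the relation $l\xcdot(h\xcdot a)=lh\xcdot a$ for $l\in H^{\!L}$ must be extracted purely from (2), (4), (11), (22) and the weak-bialgebra axioms~\eqref{propiedad de 1} and~\eqref{propiedad de epsilon}, and it is easy to get the order of factors wrong when $A$ is noncommutative. The cleanest route is probably to first nail the $H^{\!R}$-case using~(22) as the key lever, and then derive the $H^{\!L}$-case by applying the antipode $S$ (which interchanges $H^{\!L}$ and $H^{\!R}$ and is anticomultiplicative) — though one must be careful that~(22) itself, not~(3), is what is assumed here, so the symmetry is not perfectly clean and one of the two cases will need a genuinely separate short computation.
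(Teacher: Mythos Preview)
Your strategy is viable but takes a genuinely different route from the paper, and a few of your anticipated moves are detours.

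The paper does \emph{not} avoid the cocycle. Its proof is a single uniform five-line computation that uses condition~(18), the twisted module condition, as the pivot: it inserts $\varsigma(k^{(2)}\ot h^{(2)})$ via Remark~\ref{pepe} (normality plus \cite{GGV}*{Propositions~2.7 and~2.8}, which apply because $\Delta(k)\in H^{\!R}H^{\!L}\ot H^{\!R}H^{\!L}$ when $k\in H^{\!L}\cup H^{\!R}$), swaps the two sides of~(18), and then removes $\varsigma$ the same way. No case split, no condition~(22), no antipode.

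Your cocycle-free approach does go through. For $k\in H^{\!R}$ the computation you sketch works: from~(22) one gets $k\xcdot b=b(k\xcdot 1_A)$; then expanding $kh\xcdot a$ via~(2) with $\Delta(k)=1^{(1)}\ot k1^{(2)}$, using~(11) and the $H^{\!R}$ formula again to pull $k$ out, and collapsing with~(2) and~(4) gives $kh\xcdot a=(h\xcdot a)(k\xcdot 1_A)=k\xcdot(h\xcdot a)$. For $k\in H^{\!L}$, however, you do not need the antipode, nor the $H^{\!R}$ case, nor even~(22): using $\Delta(k)=1^{(1)}k\ot 1^{(2)}$ together with~(2), (4), (11) one obtains $k\xcdot(h\xcdot a)=(kh^{(1)}\xcdot 1_A)(h^{(2)}\xcdot a)$, and since $\Delta(h)=\Delta(1)\Delta(h)$ and $\Delta(k)=k1^{(1)}\ot 1^{(2)}$ this is exactly $((kh)^{(1)}\xcdot 1_A)((kh)^{(2)}\xcdot a)=kh\xcdot a$. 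So the ``genuinely separate short computation'' you anticipate for one case is shorter than you feared, and the antipode symmetry you propose is unnecessary.

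What each approach buys: the paper's argument is shorter and treats both cases at once, at the cost of invoking~(18) and the external input from~\cite{GGV}; your approach isolates the lemma as a statement about the measuring alone, independent of the existence or properties of any cocycle, which is conceptually cleaner and slightly more general.
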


\begin{proof} We have,
\begin{align*}
k\xcdot (h\xcdot a) &= k^{(1)}\xcdot (h^{(1)}\xcdot a)(k^{(2)}h^{(2)}\xcdot 1_A)\\
&= k^{(1)}\xcdot (h^{(1)}\xcdot a )\varsigma (k^{(2)}\ot h^{(2)})\\
&= \varsigma (k^{(1)}\ot h^{(1)})(k^{(2)}h^{(2)}\xcdot a)\\
&= (k^{(1)}h^{(1)}\xcdot 1_A)(k^{(2)}h^{(2)}\xcdot a)\\
&= kh\xcdot a.
\end{align*}
where the first equality follows from items~(2) and~(11); the second and the fourth ones, from Remark~\ref{pepe}, the fact that $\Delta(k)\in H^{\!R}H^{\!L}\ot H^{\!R}H^{\!L}$ and \cite{GGV}*{Propositions~2.7 and~2.8}; the third one, from item~(18); and the last one, from item~(2).
\end{proof}

\begin{lemma}\label{(u_1 ot epsilon)* sigma } For all $h,k\in H$,
$$
(h^{(1)}\xcdot 1_A)\varsigma(h^{(2)}\ot k)=\varsigma(h^{(1)}\ot k)(h^{(2)}\xcdot 1_A) = \varsigma(h\ot k).
$$
\end{lemma}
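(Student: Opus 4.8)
The goal of Lemma~\ref{(u_1 ot epsilon)* sigma } is to show that the three expressions
$(h^{(1)}\xcdot 1_A)\varsigma(h^{(2)}\ot k)$, $\varsigma(h^{(1)}\ot k)(h^{(2)}\xcdot 1_A)$ and $\varsigma(h\ot k)$ all coincide. My plan is to establish the two outer equalities separately, in each case rewriting $h^{(1)}\xcdot 1_A$ (or $h^{(2)}\xcdot 1_A$) as a value of $\varsigma$ at $1$ using the normality property recorded in Remark~\ref{pepe}, and then collapsing the product of two values of $\varsigma$ by means of the cocycle condition~(17), exploiting that one of the arguments is $1$.

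\textbf{First equality.} I would start from $(h^{(1)}\xcdot 1_A)\varsigma(h^{(2)}\ot k)$ and replace $h^{(1)}\xcdot 1_A$ by $\varsigma(1^{(1)}\ot h^{(1)})(1^{(2)}\xcdot\ \cdots)$—more precisely, use $\varsigma(1\ot h^{(1)}) = h^{(1)}\xcdot 1_A$ from Remark~\ref{pepe}, together with Equality~\eqref{propiedad de 1} to split $1$ so that the coproduct of $h$ is coherently distributed. Then the expression becomes a product $\varsigma(1^{(1)}\ot h^{(1)})\varsigma(1^{(2)}h^{(2)}\ot k)$ up to reindexing, which is exactly the right-hand side of the cocycle identity~(17) with the roles $h\rightsquigarrow 1$, $k\rightsquigarrow h$, $m\rightsquigarrow k$. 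Applying~(17) turns it into $(1^{(1)}\xcdot\varsigma(h^{(1)}\ot k^{(1)}))\varsigma(1^{(2)}\ot h^{(2)}k^{(2)})$. Now I would use condition~(4) on the inner $1^{(1)}\xcdot(-)$, and then recognize $\varsigma(1\ot h^{(2)}k^{(2)}) = h^{(2)}k^{(2)}\xcdot 1_A$ via normality; finally condition~(19) (i.e. $\varsigma(h\ot k)=\varsigma(h^{(1)}\ot k^{(1)})(h^{(2)}k^{(2)}\xcdot 1_A)$) reassembles the result as $\varsigma(h\ot k)$. There is some bookkeeping with the multiple copies of $1$ from iterated use of~\eqref{propiedad de 1}, and the main delicacy is making sure the Sweedler indices line up so that the instance of~(17) is literally applicable.

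\textbf{Second equality.} Symmetrically, for $\varsigma(h^{(1)}\ot k)(h^{(2)}\xcdot 1_A)$ I would write $h^{(2)}\xcdot 1_A = \varsigma(h^{(2)}\ot 1)$ by normality, so the product becomes $\varsigma(h^{(1)}\ot k^{(1)})\varsigma(h^{(2)}k^{(2)}\ot 1)$ after using condition~(19) to introduce the extra $k$-legs (and the counit to absorb the trivial $1$-slot appropriately). This is again the right-hand side of~(17), now with $m\rightsquigarrow 1$; applying~(17) gives $(h^{(1)}\xcdot\varsigma(k^{(1)}\ot 1))\varsigma(h^{(2)}\ot k^{(2)})$, and $\varsigma(k^{(1)}\ot 1) = k^{(1)}\xcdot 1_A$ by normality, so condition~(11) (or Remark~\ref{pepe'}) rewrites $h^{(1)}\xcdot(k^{(1)}\xcdot 1_A)$ and condition~(19) again collapses everything to $\varsigma(h\ot k)$. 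Alternatively, and perhaps more cleanly, the second equality follows from the first one applied to $\varsigma(h\ot k)$ in the form already handled by Remark~\ref{pepe'}, which records precisely $(h^{(1)}k^{(1)}\xcdot 1_A)\varsigma(h^{(2)}\ot k^{(2)}) = \varsigma(h^{(1)}\ot k^{(1)})(h^{(2)}k^{(2)}\xcdot 1_A) = \varsigma(h\ot k)$; combining this with Lemma~\ref{condicion'} or with the first equality just proved yields the claim.

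\textbf{Main obstacle.} I expect the only real friction to be the Sweedler-notation gymnastics: the argument hinges on recognizing a product of two $\varsigma$-values (one with a $1$ in a slot) as an instance of cocycle identity~(17), and getting there requires carefully inserting copies of $1$ via~\eqref{propiedad de 1} and pushing counits around so the comultiplications match. No new idea beyond normality (Remark~\ref{pepe}), the cocycle identity~(17), condition~(19), and conditions~(4) and~(11) should be needed; the proof is essentially a normalization computation showing that multiplying a $\varsigma$-value by $h^{(\bullet)}\xcdot 1_A$ on either side is absorbed, which is the expected behavior of a normal cocycle.
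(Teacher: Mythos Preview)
Your route via the cocycle identity~(17) is different from the paper's, and as written it has a genuine gap. The unproven step is the claim that $(h^{(1)}\cdot 1_A)\,\varsigma(h^{(2)}\ot k)$ ``becomes'' $\varsigma(1^{(1)}\ot h^{(1)})\,\varsigma(1^{(2)}h^{(2)}\ot k)$ ``up to reindexing''. Normality (Remark~\ref{pepe}) only gives $h^{(1)}\cdot 1_A=\varsigma(1\ot h^{(1)})$, so you obtain $\varsigma(1\ot h^{(1)})\,\varsigma(h^{(2)}\ot k)$; to match the right-hand side of~(17) with first argument $1$ you must insert the legs of $\Delta(1)$, replacing $1$ by $1^{(1)}$ in the first factor and $h^{(2)}$ by $1^{(2)}h^{(2)}$ in the second. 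In a weak Hopf algebra $\Delta(1)\ne 1\ot 1$, so this is not a free move, and justifying it seems to require precisely the absorption property the lemma asserts. The same obstruction hits your symmetric argument for the second equality (getting from $\varsigma(h^{(1)}\ot k)\,\varsigma(h^{(2)}\ot 1)$ to $\varsigma(h^{(1)}\ot k^{(1)})\,\varsigma(h^{(2)}k^{(2)}\ot 1)$ is not automatic).

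The paper's proof avoids~(17) entirely and is shorter. For the first equality it expands $\varsigma(h^{(2)}\ot k)$ via Remark~\ref{pepe'} as $(h^{(2)}\cdot(k^{(1)}\cdot 1_A))\,\varsigma(h^{(3)}\ot k^{(2)})$, then uses item~(2) to collapse $(h^{(1)}\cdot 1_A)(h^{(2)}\cdot(k^{(1)}\cdot 1_A))$ into $h^{(1)}\cdot(k^{(1)}\cdot 1_A)$, and applies Remark~\ref{pepe'} once more. For the second equality it applies~(19), rewrites $h^{(2)}k^{(2)}\cdot 1_A$ as $h^{(2)}\cdot(k^{(2)}\cdot 1_A)$ via~(11), uses~(2) to absorb the trailing $h^{(3)}\cdot 1_A$, then reverses~(11) and~(19). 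The key device in both halves is item~(2): one arranges the stray factor $h^{(\bullet)}\cdot 1_A$ adjacent to something of the form $h^{(\bullet\pm 1)}\cdot b$ so that the two merge into a single $h$-action. Your plan never invokes~(2), which is why the collapse does not go through.
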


\begin{proof} By Remark~\ref{pepe'} and item~(2),
\begin{align*}
&(h^{(1)}\xcdot 1_A)\varsigma(h^{(2)}\ot k) = (h^{(1)}\xcdot 1_A)(h^{(2)}\xcdot (k^{(1)}\xcdot 1_A)\varsigma(h^{(3)}\ot k^{(2)})\\
&\phantom{(h^{(1)}\xcdot 1_A)\varsigma(h^{(2)}\ot k)}= (h^{(1)}\xcdot (k^{(1)}\xcdot 1_A)\varsigma(h^{(2)}\ot k^{(2)})\\
&\phantom{(h^{(1)}\xcdot 1_A)\varsigma(h^{(2)}\ot k)}= \varsigma(h\ot k),
\shortintertext{and by items~(2), (11) and~(19),}
&\varsigma(h^{(1)}\ot k)(h^{(2)}\xcdot 1_A)= \varsigma(h^{(1)}\ot k^{(1)})(h^{(2)}k^{(2)}\xcdot 1_A)(h^{(3)}\xcdot 1_A)\\
&\phantom{\varsigma(h^{(1)}\ot k)(h^{(2)}\xcdot 1_A)}=\varsigma(h^{(1)}\ot k^{(1)})(h^{(2)}\xcdot (k^{(2)}\xcdot 1_A))(h^{(3)}\xcdot 1_A)\\
&\phantom{\varsigma(h^{(1)}\ot k)(h^{(2)}\xcdot 1_A)}= \varsigma(h^{(1)}\ot k^{(1)})(h^{(2)}\xcdot (k^{(2)}\xcdot 1_A))\\
&\phantom{\varsigma(h^{(1)}\ot k)(h^{(2)}\xcdot 1_A)}= \varsigma(h^{(1)}\ot k^{(1)})(h^{(2)}k^{(2)}\xcdot 1_A)\\
&\phantom{\varsigma(h^{(1)}\ot k)(h^{(2)}\xcdot 1_A)}= \varsigma(h\ot k),
\end{align*}
which finishes the proof.
\end{proof}

\begin{remark}\label{para que HL pase} By Remark~\ref{pepe'}
$$
\epsilon(h^{(1)}l^{(1)})\varsigma(h^{(2)}\ot l^{(2)})\! =\!  \epsilon(h^{(1)}l^{(1)}) (h^{(2)}l^{(2)}\xcdot 1_A) \varsigma(h^{(3)}\ot l^{(3)})\!=\!  (h^{(1)}l^{(1)}\xcdot 1_A) \varsigma(h^{(2)}\ot l^{(2)})\!=\!\varsigma(h\ot l).
$$
\end{remark}

\begin{proof}[Proof of Theorem~\ref{espanholes implica Bhom y Brzezinksi}] By item~(19) and \cite{GGV}*{Propositions~2.4 and 2.7}, the map $\sigma$ exists. Items~(2) and (4) are trivially fulfilled. By items~(17) and~(18), items~(8) and~(9) hold. Moreover by~\cite{GGV}*{Proposition~4.2} and items~(2), (4) and~(11), item~(1) also hold.  Let $\ov{\Pi}^L\coloneqq S^{-1}\circ \Pi^L$ and $l\in H^{\!L}$, $h\in H$ and $a\in A$. By~items~(2), (4) and~(11), Lemma~\ref{condicion'} and \cite{GGV}*{Propositions~4.1 and~4.2}, we have
\begin{align*}
&(l\xcdot 1_A)(h\xcdot a)= l\xcdot (h\xcdot a)=lh\xcdot a
\shortintertext{and}
& (h\xcdot a)(l\xcdot 1_A) = \ov{\Pi}^L(l)\xcdot (h\xcdot a) = S^{-1}(l)\xcdot (h\xcdot a)=S^{-1}(l)h\xcdot a,
\end{align*}
which proves item~(3). In order to check item~(5), it is sufficient to note that, by Lemma~\ref{(u_1 ot epsilon)* sigma }, \cite{BNS1}*{Equal\-ities~(2.7a) and~(2.7b)}, and item~(3),
\begin{align*}
& \sigma(lh, k)= (l^{(1)}h^{(1)}\xcdot 1_A)\sigma(l^{(2)}h^{(2)},k)\\
&\phantom{\sigma(lh, k)} = (lh^{(1)}\xcdot 1_A)\sigma(h^{(2)},k)\\
&\phantom{\sigma(lh, k)} = (l\xcdot 1_A)(h^{(1)}\xcdot 1_A)\sigma(h^{(2)},k)\\
&\phantom{\sigma(lh, k)}= (l\xcdot 1_A)\sigma(h, k),
\shortintertext{and}
& \sigma(S^{-1}(l)h, k)= \sigma(S^{-1}(l)^{(1)}h^{(1)}, k)(S^{-1}(l)^{(2)}h^{(2)}\xcdot 1_A)\\
&\phantom{\sigma(S^{-1}(l)h, k)}= \sigma(h^{(1)}, k)(S^{-1}(l)h^{(2)}\xcdot 1_A)\\
&\phantom{\sigma(S^{-1}(l)h, k)}= \sigma(h^{(1)}, k) (h^{(2)}\xcdot 1_A)(l\xcdot 1_A)\\
&\phantom{\sigma(S^{-1}(l)h, k)}=\sigma(h, k)(l\xcdot 1_A).
\end{align*}
By item~(11), \cite{BNS1}*{Equality~(2.7a)}, Lemma~\ref{(u_1 ot epsilon)* sigma } and~\cite{GGV}*{Proposition~2.8} (which applies by Remark~\ref{para que HL pase}), we have
$$
(h^{(1)}\xcdot (l\xcdot 1_A))\sigma(h^{(2)}, k) = (h^{(1)}l^{(1)}\xcdot 1_A)\sigma(h^{(2)}l^{(2)}, k)= \sigma(hl, k)= \sigma(h,lk),
$$
for all $l\in H^{\!L}$ and $h,k\in H$. So, item~(6) holds. Finally, item~(7)  follows from Remark~\ref{pepe} and items~(20) and~(21).
\end{proof}

\begin{proposition}\label{varsigma inversible implica sigma inversible} Under the hypotheses of Theorem~\ref{espanholes implica Bhom y Brzezinksi}, if $\varsigma$ is invertible, then $\sigma$ is also.
\end{proposition}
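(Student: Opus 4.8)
The plan is to build the inverse $\bar{\sigma}$ of $\sigma$ by showing that the inverse $\bar{\varsigma}$ of $\varsigma$ factorizes through the canonical surjection $q\colon H\ot H\to H\ot_{H^{\!L}} H$, which is precisely the reverse of the passage $\tilde{\varsigma}=\bar{\sigma}\xcirc q$ appearing in Proposition~\ref{sigma inversible implica varsigma inversible}. By the uniqueness of the inverse cocycle this is the only possible candidate, so the real content is that the factorization exists and that the resulting map satisfies conditions~(13)--(16).

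The factorization follows from condition~(23), which says exactly that $\bar{\varsigma}$ is left unchanged after left multiplication by $h^{(1)}k^{(1)}\xcdot 1_A$: this is the $H^{\!L}$-balanced analogue of condition~(19), and the same results of \cite{GGV} that were used in the proof of Theorem~\ref{espanholes implica Bhom y Brzezinksi} to factorize $\varsigma$ through $p$ yield, in their $H^{\!L}$-balanced version, a well defined map $\bar{\sigma}\colon H\ot_{H^{\!L}} H\to A$ with $\bar{\varsigma}=\bar{\sigma}\xcirc q$. Once this is available, condition~(13) is just a rewriting of~(23) via $\bar{\varsigma}=\bar{\sigma}\xcirc q$ and the compatibility of $q$ with $\Delta$; and condition~(16) follows from~(24): since $\varsigma=\sigma\xcirc p$ and $\bar{\varsigma}=\bar{\sigma}\xcirc q$ one gets $\sigma(h^{(1)},k^{(1)})\bar{\sigma}(h^{(2)},k^{(2)})=hk\xcdot 1_A$ and $\bar{\sigma}(h^{(1)},k^{(1)})\sigma(h^{(2)},k^{(2)})=hk\xcdot 1_A$, and by condition~(11) the right-hand side of the first equality equals $h\xcdot(k\xcdot 1_A)$.

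There remain conditions~(14) and~(15), and this is where the real work lies. For~(14) one expands $\bar{\sigma}(lh,k)$ by~(13), uses the comultiplication formulas for $l\in H^{\!L}$ and for $S^{-1}(l)\in H^{\!R}$ (standard identities from \cite{BNS1}), rewrites the resulting scalar factor by condition~(3) in its two forms $lg\xcdot 1_A=(l\xcdot 1_A)(g\xcdot 1_A)$ and $S^{-1}(l)g\xcdot 1_A=(g\xcdot 1_A)(l\xcdot 1_A)$, reabsorbs the copy of $\Delta(1)$ into $\Delta(h)$, and applies~(13) backwards; this gives $\bar{\sigma}(lh,k)=(l\xcdot 1_A)\bar{\sigma}(h,k)$ and, symmetrically, the second half of~(14). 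Condition~(15) is the main obstacle: unlike~(13) and~(16) it is not forced by the defining equations of $\bar{\sigma}$, and unlike~(14) it intertwines both tensor legs with the antipode, so it seems to require the module property~(6) of $\sigma$ in an essential way, combined with the inverse relations~(16) and the weak Hopf algebra identities of \cite{BNS1}, all handled with care about the $H^{\!R}$- and $H^{\!L}$-balancings of the tensor products involved. A clean way to organize~(14) and~(15) is probably to establish first the corresponding identities for $\bar{\varsigma}$ on $H\ot H$ — these are among the properties of the inverse cocycle discussed below \cite{GGV}*{Definition~5.6} — and only then push everything down along $\bar{\varsigma}=\bar{\sigma}\xcirc q$.
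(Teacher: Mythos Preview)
Your overall strategy---factor $\bar{\varsigma}$ through $q$ via \cite{GGV}, read off~(13) from~(23) and~(16) from~(24) together with~(11)---is exactly what the paper does, and those steps are fine. The divergence is in how you handle~(14) and~(15).

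The key device you are missing is the right-sided companion of~(13):
\[
\bar{\sigma}(h,k)=\bar{\sigma}(h^{(1)},k^{(1)})\,(h^{(2)}k^{(2)}\xcdot 1_A),
\]
which the paper obtains immediately from Remark~\ref{rere1q} (valid once~(13) and~(16) are known) combined with~(11). Your plan for~(14) expands only via~(13), which places the scalar factor on the left; that works for $\bar{\sigma}(lh,k)$, but for $\bar{\sigma}(S^{-1}(l)h,k)$ the coproduct of $S^{-1}(l)\in H^{\!R}$ pushes $S^{-1}(l)$ into the \emph{second} leg of the tensor, so after your step it is still trapped inside $\bar{\sigma}$. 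The paper avoids this by expanding with the identity above instead, which puts the scalar on the right and lets~(3) pull out $(l\xcdot 1_A)$.

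For~(15) you overestimate the difficulty: it does not need~(6), nor any further use of the inverse relations beyond what already went into the displayed identity. Expanding $\bar{\sigma}(h,S^{-1}(l)k)$ by that right-sided identity, using \cite{BNS1}*{(2.7b)} for $\Delta(S^{-1}(l))$, and then only~(2), (3) and~(11), gives
\[
\bar{\sigma}(h,S^{-1}(l)k)=\bar{\sigma}(h^{(1)},k)\,(h^{(2)}\xcdot(l\xcdot 1_A))
\]
in six lines. So the ``main obstacle'' dissolves once you have the right-sided form of~(13); your alternative suggestion of importing the identities for $\bar{\varsigma}$ from \cite{GGV} would also work, but it is a detour compared with this direct computation.
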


\begin{proof} Let $\bar{\varsigma}$ be the inverse of $\varsigma$. By~\cite{GGV}*{Proposition~5.13}, the map $\bar{\varsigma}$ factorizes  throughout a map $\bar{\sigma}\colon H\ot_{H^{\!L}} H\to A$. Item~(13) follows immediately from~item~(23), while item~(16) follows from items~(11) and~(24). We next prove item~(14). By item~(11) and Remark~\ref{rere1q}, we have
\begin{equation}\label{pepi}
\bar{\sigma}(h,k)= \bar{\sigma}(h^{(1)},k^{(1)})(h^{(2)}k^{(2)}\xcdot 1_A)\quad \text{for all $h,k\in H$.}
\end{equation}
Using this, items~(3), (13) and \cite{BNS1}*{Equalities~(2.7a) and~(2.7b)}, we obtain that
\allowdisplaybreaks
\begin{align*}
& \bar{\sigma}(lh,k)= (l^{(1)}h^{(1)}k^{(1)}\xcdot 1_A) \bar{\sigma}(l^{(2)}h^{(2)}, k^{(2)})\\
& \phantom{\bar{\sigma}(lh,k)} = (l h^{(1)}k^{(1)}\xcdot 1_A) \bar{\sigma}(h^{(2)},k^{(2)}) \\
&\phantom{\bar{\sigma}(lh,k)}= (l\xcdot 1_A)( h^{(1)}k^{(1)}\xcdot 1_A) \bar{\sigma}(h^{(2)}, k^{(2)})\\
&\phantom{\bar{\sigma}(lh,k)}= (l\xcdot 1_A)\bar{\sigma}(h,k)
\shortintertext{and}
& \bar{\sigma}(S^{-1}(l)h,k)= \bar{\sigma}(S^{-1}(l)^{(1)}h^{(1)}, k^{(1)}) (S^{-1}(l)^{(2)}h^{(2)}k^{(2)}\xcdot 1_A)\\
& \phantom{\bar{\sigma}(S^{-1}(l)h,k)} = \bar{\sigma}(h^{(1)},k^{(1)}) (S^{-1}(l)h^{(2)}k^{(2)}\xcdot 1_A)\\
& \phantom{\bar{\sigma}(S^{-1}(l)h,k)} = \bar{\sigma}(h^{(1)}, k^{(1)}) (h^{(2)}k^{(2)}\xcdot 1_A)(l\xcdot 1_A)\\
& \phantom{\bar{\sigma}(S^{-1}(l)h,k)} = \bar{\sigma}(h,k) (l\xcdot 1_A)
\end{align*}
for all $h,k\in H$ and $l\in H^{\!L}$, which is item~(14). It remains to prove item~(15). By the first equality in~\eqref{pepi}, \cite{BNS1}*{Equality~(2.7b)} and items~(2), (3) and~(11), we obtain that
\begin{align*}
& \bar{\sigma}(h, S^{-1}(l)k) = \bar{\sigma}(h^{(1)}, S^{-1}(l)^{(1)}k^{(1)})(h^{(2)}S^{-1}(l)^{(2)}k^{(2)}\xcdot 1_A)\\
& \phantom{\bar{\sigma}(h, S^{-1}(l)k)} = \bar{\sigma}(h^{(1)}, k^{(1)})(h^{(2)}S^{-1}(l)k^{(2)}\xcdot 1_A)\\
& \phantom{\bar{\sigma}(h, S^{-1}(l)k)} = \bar{\sigma}(h^{(1)}, k^{(1)})(h^{(2)}\xcdot (S^{-1}(l)k^{(2)}\xcdot 1_A))\\
& \phantom{\bar{\sigma}(h, S^{-1}(l)k)} = \bar{\sigma}(h^{(1)}, k^{(1)})(h^{(2)}\xcdot ((k^{(2)}\xcdot 1_A)(l\xcdot 1_A)))\\
& \phantom{\bar{\sigma}(h, S^{-1}(l)k)} = \bar{\sigma}(h^{(1)}, k^{(1)})(h^{(2)}\xcdot (k^{(2)}\xcdot 1_A))(h^{(3)}\xcdot (l\xcdot 1_A))\\
& \phantom{\bar{\sigma}(h, S^{-1}(l)k)} = \bar{\sigma}(h^{(1)}, k^{(1)})(h^{(2)} k^{(2)}\xcdot 1_A)(h^{(3)}\xcdot (l\xcdot 1_A))\\
& \phantom{\bar{\sigma}(h, S^{-1}(l)k)} = \bar{\sigma}(h^{(1)}, k)(h^{(2)}\xcdot (l\xcdot 1_A))
\end{align*}
for all $h,k\in H$ and $l\in H^{\!L}$, as desired.
\end{proof}

\subsection{The isomorphism}
Let $H$ be a weak Hopf algebra, let $A$ be an algebra, let $p\colon H\ot H\to H\ot_{H^{\!R}} H$ be the canonical surjection and let $\rho\colon H\ot A\to A$ and $\sigma\colon H\ot_{H^{\!R}} H\to A$ be maps. Assume that $\rho$ and $\sigma$ satisfy conditions~(1)--(10) (from which $\rho$ and $\varsigma\coloneqq \sigma\xcirc p$  satisfy conditions~~(2), (4), (11) and~(17)--(22)).

\smallskip

Let $i\colon A\times^{\rho}_{\varsigma} H\to  A\ot H$ be the canonical inclusion and let $\pi\colon A\ot H\to A\#^{\rho}_{\sigma} H$ be the canonical surjection. Let $\nabla_{\!\rho}$ be as above of Theorem~\ref{teor: producto cruzado espanhol}. Since, for $a\in A$ and $h\in H$,
$$
(\pi\xcirc \nabla_{\!\rho})(a\ot h)\hsa =\hsa  a(h^{(\hsb 1\hsb )}\xcdot 1_A)\ot_{H^{\!L}}\hsa h^{(\hsb 2\hsb )}\hsa =\hsa  a\xcdot \Pi^L(h^{(\hsb 1\hsb )})\ot_{H^{\!L}} h^{(\hsb 2\hsb )}\hsa =\hsa a\ot_{H^{\!L}} \Pi^L(h^{(\hsb 1\hsb )})h^{(\hsb 2\hsb )}\hsa =\hsa a\ot_{H^{\!L}}\hsa h,
$$
we have $\pi\xcirc \nabla_{\!\rho} = \pi$. Clearly we also have $\nabla_{\!\rho}\xcirc i=i$.

\begin{proposition}\label{isomorfismo}
The map $\psi\colon A\times^{\rho}_{\varsigma} H \to A\#^{\rho}_{\sigma} H$, defined by $\psi\coloneqq \pi\xcirc i$, is a left $A$-linear and right $H$-colinear algebra isomorphism.
\end{proposition}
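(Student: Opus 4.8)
The plan is to exhibit an explicit two‑sided inverse of $\psi$ and then to verify that $\psi$ is multiplicative and unital. Left $A$‑linearity and right $H$‑colinearity of $\psi$ are automatic, since both $i$ and $\pi$ are left $A$‑linear and right $H$‑colinear for the module and comodule structures introduced after Theorems~\ref{teor: producto cruzado} and~\ref{teor: producto cruzado espanhol}, hence so is the composite $\psi=\pi\xcirc i$.

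For the inverse, the idea is to push $\nabla_{\!\rho}$ down through $\pi$. The point to check is that $\nabla_{\!\rho}$ kills the relations defining $A\ot_{H^{\!L}} H$ from $A\ot H$, i.e. that $\nabla_{\!\rho}(a\xcdot l\ot h)=\nabla_{\!\rho}(a\ot lh)$ for $a\in A$, $l\in H^{\!L}$, $h\in H$; this is immediate from item~(3) and \cite{BNS1}*{Equality~(2.7a)}, since
$$
\nabla_{\!\rho}(a\xcdot l\ot h)=a(l\xcdot 1_A)(h^{(1)}\xcdot 1_A)\ot h^{(2)}=a(lh^{(1)}\xcdot 1_A)\ot h^{(2)}=a(l^{(1)}h^{(1)}\xcdot 1_A)\ot l^{(2)}h^{(2)}=\nabla_{\!\rho}(a\ot lh).
$$
Since $\ima(\nabla_{\!\rho})=A\times^{\rho}_{\varsigma} H$, there is a unique $k$‑linear map $\phi\colon A\#^{\rho}_{\sigma} H\to A\times^{\rho}_{\varsigma} H$ with $i\xcirc\phi\xcirc\pi=\nabla_{\!\rho}$, and $\phi$ is again left $A$‑linear and right $H$‑colinear because $\nabla_{\!\rho}$ is. Using the identities $\pi\xcirc\nabla_{\!\rho}=\pi$ and $\nabla_{\!\rho}\xcirc i=i$ recorded just before the statement, together with the surjectivity of $\pi$ and the injectivity of $i$, one gets
$$
i\xcirc(\phi\xcirc\psi)=i\xcirc\phi\xcirc\pi\xcirc i=\nabla_{\!\rho}\xcirc i=i\qquad\text{and}\qquad(\psi\xcirc\phi)\xcirc\pi=\pi\xcirc i\xcirc\phi\xcirc\pi=\pi\xcirc\nabla_{\!\rho}=\pi,
$$
whence $\phi\xcirc\psi=\ide$ and $\psi\xcirc\phi=\ide$, so $\psi$ is a bijection with inverse $\phi$.

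It remains to see that $\psi$ is an algebra map. For multiplicativity, I would expand $(a\times h)(b\times k)$ by the product formula of Theorem~\ref{teor: producto cruzado espanhol} (recalling $c\times m=c(m^{(1)}\xcdot 1_A)\ot m^{(2)}$), apply Remark~\ref{pepe'} to rewrite $\varsigma(h^{(2)}\ot k^{(1)})(h^{(3)}k^{(2)}\xcdot 1_A)$ as $(h^{(2)}\xcdot(k^{(1)}\xcdot 1_A))\varsigma(h^{(3)}\ot k^{(2)})$, and then item~(2) to absorb the new factor, reaching
$$
(a\times h)(b\times k)=a\bigl(h^{(1)}\xcdot(b(k^{(1)}\xcdot 1_A))\bigr)\varsigma(h^{(2)}\ot k^{(2)})\ot h^{(3)}k^{(3)}.
$$
On the other hand, computing $\psi(a\times h)\psi(b\times k)=\bigl(a(h^{(1)}\xcdot 1_A)\ot_{H^{\!L}}h^{(2)}\bigr)\bigl(b(k^{(1)}\xcdot 1_A)\ot_{H^{\!L}}k^{(2)}\bigr)$ with the product of Theorem~\ref{teor: producto cruzado} and using item~(2) yields $a\bigl(h^{(1)}\xcdot(b(k^{(1)}\xcdot 1_A))\bigr)\sigma(h^{(2)},k^{(2)})\ot_{H^{\!L}}h^{(3)}k^{(3)}$, which, since $\varsigma=\sigma\xcirc p$, is exactly $\psi\bigl((a\times h)(b\times k)\bigr)=\pi\bigl((a\times h)(b\times k)\bigr)$. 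Unitality is immediate: $\psi(1_A\times 1)=\pi\bigl(\nabla_{\!\rho}(1_A\ot 1)\bigr)=(\pi\xcirc\nabla_{\!\rho})(1_A\ot 1)=1_A\ot_{H^{\!L}}1$, the unit of $A\#^{\rho}_{\sigma} H$. The only genuine work is the multiplicativity bookkeeping — a routine but slightly delicate manipulation of Sweedler indices in which Remark~\ref{pepe'} (equivalently the twisted module condition) has to be inserted at the right spot; everything else is formal diagram chasing.
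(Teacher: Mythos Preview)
Your proof is correct and follows essentially the same route as the paper's. The only cosmetic differences are the order of the arguments (you establish bijectivity before multiplicativity, the paper does the reverse) and your slightly slicker unitality check via $\pi\xcirc\nabla_{\!\rho}=\pi$ in place of the paper's explicit computation; the key ingredients---the factorization of $\nabla_{\!\rho}$ through $\pi$ using item~(3) and \cite{BNS1}*{(2.7a)}, the identities $\pi\xcirc\nabla_{\!\rho}=\pi$ and $\nabla_{\!\rho}\xcirc i=i$, and the use of item~(2) together with the twisted module identity (your Remark~\ref{pepe'}, the paper's item~(9) with $a=1_A$) in the multiplicativity check---are identical.
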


\begin{proof} Let $a,b\in A$ and $h,k\in H$. Since
\allowdisplaybreaks
\begin{align*}
\psi(a\times h)\psi(b\times k) & = (a(h^{(1)}\xcdot 1_A)\ot_{H^{\!L}} h^{(2)})(b(k^{(1)}\xcdot 1_A)\ot_{H^{\!L}} k^{(2)})\\
& = a (h^{(1)}\xcdot 1_A)\bigl(h^{(2)}\xcdot (b(k^{(1)}\xcdot 1_A))\bigr)\sigma(h^{(3)},k^{(2)}) \ot_{H^{\!L}} h^{(4)}k^{(3)}\\
& = a \bigl(h^{(1)}\xcdot (b(k^{(1)}\xcdot 1_A))\bigr)\sigma(h^{(2)},k^{(2)}) \ot_{H^{\!L}} h^{(3)}k^{(3)}\\
& = a (h^{(1)}\cdot b)(h^{(2)}\xcdot (k^{(1)}\xcdot 1_A)) \sigma(h^{(3)},k^{(2)})\ot_{H^{\!L}} h^{(4)}k^{(3)}\\
& = a (h^{(1)}\cdot b)\sigma(h^{(2)},k^{(1)})(h^{(3)}k^{(2)}\xcdot 1_A)\ot_{H^{\!L}} h^{(4)}k^{(3)}\\
&= \psi\bigl(a (h^{(1)}\cdot b)\varsigma(h^{(2)}\ot k^{(1)}) \times h^{(3)}k^{(2)}\bigr)\\
& = \psi\bigl((a\times h)(b\times k)\bigr).
\end{align*}
and
$$
\psi(1_A\times 1)\! =\! 1^{(1)}\xcdot 1_A\ot_{H^{\!L}} 1^{(2)}\! =\! (1^{(1)}\xcdot 1_A)\cdot 1^{(2)}\ot_{H^{\!L}} 1 = (1^{(1)}\xcdot 1_A)(1^{(2)}\xcdot 1_A) \ot_{H^{\!L}} 1\! =\! 1_A \ot_{H^{\!L}} 1,
$$
the map $\psi$ is a unitary algebra morphism. It is clear that it is also left $A$-linear and right $H$-colinear. Let $l\in H^{\!L}$, $h\in H$ and $a\in A$. Since, by item~(3) and~\cite{BNS1}*{Equality~(2.7a)},
$$
\nabla_{\!\rho}(a\xcdot l\ot h)\! =\!  a(l\cdot 1_A)(h^{(\hsb 1\hsb )}\xcdot 1_A)\ot h^{(\hsb 2\hsb )}\!  =\!  a(lh^{(\hsb 1\hsb )}\xcdot 1_A)\ot h^{(\hsb 2\hsb )}\!=\!  a(l^{(\hsb 1\hsb )}h^{(\hsb 1 \hsb )}\xcdot 1_A)\ot l^{(\hsb 2 \hsb )}h^{(\hsb 2 \hsb )}\! =\! \nabla_{\!\rho}(a\ot lh),
$$
there exists $\phi\colon A\#^{\rho}_{\sigma} H\to A\times^{\rho}_{\varsigma} H$ such that $i\xcirc \phi\xcirc \pi=\nabla_{\!\rho}$. The computations
$$
i\xcirc \phi\xcirc \psi= i\xcirc \phi\xcirc \pi \xcirc i=\nabla_{\!\rho}\xcirc i=i\qquad\text{and}\qquad \psi\xcirc \phi\xcirc \pi = \pi\xcirc i\xcirc \phi\xcirc \pi=\pi\xcirc \nabla_{\!\rho}= \pi,
$$
prove that $\phi$ and $\psi$ are inverses one of each other.
\end{proof}

\begin{remark} The isomorphism introduced in the previous proposition is natural in an evident sense.
\end{remark}

\begin{example} Let $k$ be a field and $K\coloneqq k\times k$. Let $C\coloneqq \{1,g\}$ be the cyclic order $2$ and let $H\coloneqq K\ot k[C]\ot K$. For $a,b,c,d\in k$, we set $\lambda_{ab}^{cd}\coloneqq (a,b)\ot 1\ot (c,d)$ and $G_{ab}^{cd}\coloneqq (a,b)\ot g\ot (c,d)$. The $k$-vector space $H$ is a weak Hopf algebra via the multiplication defined by
$$
\lambda_{ab}^{cd}\lambda_{a'b'}^{c'd'}\coloneqq \lambda_{aa',bb'}^{cc',dd'},\quad \lambda_{ab}^{cd}G_{a'b'}^{c'd'}\coloneqq G_{aa',bb'}^{dc',cd'},\quad G_{ab}^{cd}\lambda_{a'b'}^{c'd'}\coloneqq G_{ab',ba'}^{cc',dd'}\quad\text{and}\quad G_{ab}^{cd}G_{a'b'}^{c'd'}\coloneqq \lambda_{ab',ba'}^{dc',cd'},
$$
and the comultiplication defined by
$$
\Delta(\lambda_{ab}^{cd})\coloneqq \lambda_{ab}^{10}\ot \lambda_{10}^{cd} + \lambda_{ab}^{01}\ot \lambda_{01}^{cd}\quad\text{and}\quad \Delta(G_{ab}^{cd})\coloneqq G_{ab}^{10}\ot G_{01}^{cd} + G_{ab}^{01}\ot G_{10}^{cd}.
$$
The unit is $\lambda_{11}^{11}$, the counit is the map $\epsilon\colon H\to k$ given by $\epsilon(\lambda_{ab}^{cd}) = ac+bd$ and $\epsilon(G_{ab}^{cd}) = ad+bc$, and the antipode is the map $S\colon H\to H$ given by $S(\lambda_{ab}^{cd}) = \lambda_{cd}^{ab}$ and $S(G_{ab}^{cd}) = G_{cd}^{ab}$. A straightforward computation shows that the maps $\Pi^{\!L}\colon H\to H$ and $\Pi^{\!R}\colon H\to H$ are given by
$$
\Pi^{\!L}(\lambda_{ab}^{cd}) = \lambda_{ac,bd}^{11},\quad \Pi^{\!L}(G_{ab}^{cd}) = \lambda_{ad,bc}^{11},\quad \Pi^{\!R}(\lambda_{ab}^{cd}) = \lambda_{11}^{ac,bd}\quad\text{and}\quad \Pi^{\!R}(G_{ab}^{cd}) = \lambda_{11}^{bc,ad}.
$$
The map $\rho\colon H\ot K\to K$, defined by
$$
\lambda_{ab}^{cd}\cdot (x,y)\coloneqq (axc,byd)\quad\text{and}\quad G_{ab}^{cd}\cdot (x,y) \coloneqq (axd,byc),
$$
where $h\cdot (x,y)$ denotes $\rho(h)(x,y)$ satisfies conditions~(1)--(4); and the map $\sigma\colon H\ot_{H^{\!R}} H\to K$, defined by
\begin{alignat*}{2}
&\sigma(\lambda_{ab}^{cd},\lambda_{a'b'}^{c'd'})\coloneqq (aa'cc',bb'dd'),&&\qquad \sigma(\lambda_{ab}^{cd},G_{a'b'}^{c'd'})\coloneqq   (aa'cd',bb'dc'),\\
&\sigma(G_{ab}^{cd},\lambda_{a'b'}^{c'd'}) \coloneqq  (aa'dd',bb'cc'),&&\qquad \sigma(G_{ab}^{cd},G_{a'b'}^{c'd'}) \coloneqq  (aa'dc',bb'cd'),
\end{alignat*}
satisfies conditions~(5)--(9). Moreover $\sigma$ is invertible in the sense of Definition~\ref{cociclo inversible}. Since $\sigma$ does not fulfill condition~(10), this shows that condition~(10) in Theorem~\ref{Bhom y Brzezinski son espanholes} can not be eliminated.
\end{example}

\begin{bibdiv}
\begin{biblist}

\bib{AFGLV}{article}{
 author={Alonso {\'A}lvarez, J. N.},
 author={Fern{\'a}ndez Vilaboa, J. M.},
 author={L\'opez L\'opez, J. M.},
 author={Gonz{\'a}lez Rodr{\'{\i}}guez, R.},
 author={Rodr{\'{\i}}guez Raposo, A. B.},
 title={Weak Hopf algebras with projection and weak smash bialgebra structures},
 journal={J. Algebra},
 volume={269},
 date={2003},
 number={2},
 pages={701-725},
 issn={0021-8693},
}

\bib{AFGR1}{article}{
  author={Alonso \'{A}lvarez, J. N.},
  author={Fern\'{a}ndez Vilaboa, J. M.},
  author={Gonz\'{a}lez Rodr\'{i}guez, R.},
  author={Rodr\'{i}guez Raposo, A. B.},  title={Weak C-cleft extensions and weak Galois extensions},
  journal={Journal of Algebra},
  volume={299},
  number={1},
  pages={276--293},
  year={2006},
  publisher={Academic Press}
}

\bib{AFGR2}{article}{
   author={Alonso \'{A}lvarez, J. N.},
   author={Fern\'{a}ndez Vilaboa, J. M.},
   author={Gonz\'{a}lez Rodr\'{i}guez, R.},
   author={Rodr\'{i}guez Raposo, A. B.},
   title={Weak $C$-cleft extensions, weak entwining structures and weak Hopf
   algebras},
   journal={J. Algebra},
   volume={284},
   date={2005},
   number={2},
   pages={679--704},
   issn={0021-8693},
   review={\MR{2114575}},
   doi={10.1016/j.jalgebra.2004.07.043},
}

\bib{AFGR3}{article}{
 author={Alonso {\'A}lvarez, J. N.},
 author={Fern{\'a}ndez Vilaboa, J. M.},
 author={Gonz{\'a}lez Rodr{\'{\i}}guez, R.},
 author={Rodr{\'{\i}}guez Raposo, A. B.},
 title={Crossed products in weak contexts},
 journal={Appl. Categ. Structures},
 volume={18},
 date={2010},
 number={3},
 pages={231--258},
 issn={0927-2852},
 review={\MR{2640214 (2011d:18009)}},
 doi={10.1007/s10485-008-9139-2},
}

\bib{AG}{article}{
  title={Crossed products for weak Hopf algebras with coalgebra splitting},
  author={{\'A}lvarez, JN Alonso},
  author={Rodr{\'\i}guez, R Gonz{\'a}lez},
  journal={Journal of Algebra},
  volume={281},
  number={2},
  pages={731--752},
  year={2004},
  publisher={Elsevier}
}

\bib{BB}{article}{
  author={B{\"o}hm, Gabriella},
  author={Brzezi{\'n}ski, Tomasz},
  title={Cleft extensions of Hopf algebroids},
  journal={Applied Categorical Structures},
  volume={14},
  number={5-6},
  pages={431--469},
  year={2006},
  publisher={Springer}
}

\bib{BNS1}{article}{
 author={B\"{o}hm, Gabriella},
 author={Nill, Florian},
 author={Szlach\'anyi, Kornel},
 title={Weak Hopf Algebras, I. Integral Theory and $C^*$-Structure},
 journal={J. Algebra},
 volume={221},
 date={1999},
 number={2},
 pages={385--438},
 issn={0021-8693},
}

\bib{BNS2}{article}{
 author={B\"{o}hm, Gabriella},
 author={Nill, Florian},
 author={ Szlach\'anyi, Kornel},
 title={Weak Hopf Algebras, II. Representation theory, dimensions and the Markov trace},
 journal={J. Algebra},
 volume={233},
 date={2000},
 pages={156--212},
 issn={0021-8693},
}

\bib{FGR}{article}{
 author={Fern{\'a}ndez Vilaboa, J. M.},
 author={Gonz{\'a}lez Rodr{\'{\i}}guez, R.},
 author={Rodr{\'{\i}}guez Raposo, A. B.},
 title={Preunits and weak crossed products},
 journal={Journal of Pure and Applied Algebra},
 volume={213},
 date={2009},
 pages={2244--2261},
 issn={0022-4049},
}

\bib{GGV}{article}{
   author={Guccione, Jorge A.},
   author={Guccione, Juan J.},
   author={Valqui, Christian},
   title={Cleft extensions of weak Hopf algebras},
   eprint={1811.02909},
   date={2019}
}

\bib{LSW}{article}{
  title={On weak crossed products of weak Hopf algebras},
  author={Liu, Ling},
  author={Shen, Bing-liang},
  author={Wang, Shuan-hong},
  journal={Algebras and Representation Theory},
  volume={16},
  number={3},
  pages={633--657},
  year={2013},
  publisher={Springer}
}

\bib{Ra}{article}{
   author={Rodr\'\i guez Raposo},
   author={Ana Bel\'en},
   title={Crossed products for weak Hopf algebras},
   journal={Comm. Algebra},
   volume={37},
   date={2009},
   number={7},
   pages={2274--2289},
   issn={0092-7872},
   review={\MR{2536918}},
   doi={10.1080/00927870802620274},
}

\end{biblist}
\end{bibdiv}

\end{document}